\documentclass[11pt]{amsart}
\usepackage{amsmath}
\usepackage{amssymb}
\usepackage{amsfonts}
\usepackage{amsthm}
\usepackage{enumerate}
\usepackage[mathscr]{eucal}
\usepackage{graphicx}
\usepackage[pdftex, bookmarksnumbered, bookmarksopen, colorlinks, citecolor=blue, linkcolor=blue]{hyperref}

\newtheorem{theorem}{Theorem}[section]
\newtheorem{lemma}[theorem]{Lemma}

\theoremstyle{definition}

\newtheorem{proposition}[theorem]{Proposition}

\theoremstyle{remark}
\newtheorem{remark}[theorem]{Remark}

\numberwithin{equation}{section}

\def\intslash{\rlap{\kern  .32em $\mspace {.5mu}\backslash$ }\int}
\def\qsl{{\rlap{\kern  .32em $\mspace {.5mu}\backslash$ }\int_{Q_x}}}

\def\off{{\text{\rm off}}}

\def\R{{\mathbb R}}
\def\Z{{\mathbb Z}}
\def\M{{\mathcal M}}
\def\C{{\mathcal C}}

\def\A{{\mathcal A}}
\def\Q{{\mathcal Q}}

\def\S{\mathbf S}
\def\Q{\mathcal Q}

\def\pari{\partial}
\def\Ga{\Gamma}
\def\ga{\gamma}

\def\eg{{\it e.g. }}

\def\supp{{\text{\rm supp}}}

\def\inn#1#2{\langle#1,#2\rangle}

\def\rta{\rightarrow}

\def\card{\text{\rm card}}
\def\lc{\lesssim}

\def\del{\delta}             
\def\eps{\varepsilon}

\def\zet{\zeta}
\def\tet{\theta}

\def\lam{\lambda}            \def\Lam{\Lambda}

\def\si{\sigma}              
\def\vphi{\varphi}
             
              \def\Om{\Omega}
\def\fr{\frac}

\newcommand{\Be}{\begin{equation}}
\newcommand{\Ee}{\end{equation}}
\newcommand{\Bes}{\begin{equation*}}
\newcommand{\Ees}{\end{equation*}}
\newcommand{\Bsp}{\begin{split}}
\newcommand{\Esp}{\end{split}}
\newcommand{\Bm}{\begin{multline}}
\newcommand{\Em}{\end{multline}}
\newcommand{\Bea}{\begin{eqnarray}}
\newcommand{\Eea}{\end{eqnarray}}
\newcommand{\Beas}{\begin{eqnarray*}}
\newcommand{\Eeas}{\end{eqnarray*}}
\newcommand{\Benu}{\begin{enumerate}}
\newcommand{\Eenu}{\end{enumerate}}
\newcommand{\Bi}{\begin{itemize}}
\newcommand{\Ei}{\end{itemize}}
\begin{document}

\title[Noncommutative singular integral operators]{Weak type $(1,1)$ boundedness of noncommutative singular integral operators with rough kernels}

\author[]{Xudong Lai}
\address{Xudong Lai:
Institute for Advanced Study in Mathematics\\
Harbin Institute of Technology\\
Harbin
150001\\
China;
Zhengzhou Research Institute\\
Harbin Institute of Technology\\
Zhengzhou
450000\\
China}

\email{xudonglai@hit.edu.cn\ xudonglai@mail.bnu.edu.cn}
\keywords{Maximal operator, singular integral operator, rough kernel, weak $(1,1)$}

\subjclass[2010]{Primary 46L52, 46L51, Secondary 42B25, 42B20}



\keywords{Noncommutative $L_p$ space, noncommutative Calder\'on--Zygmund decomposition, weak (1,1) bound, singular integral operator, rough kernel}

\begin{abstract}
In this paper, we establish the weak type $(1,1)$ boundedness of noncommutative singular integral operators with rough kernels.
To deal with the rough kernels, we develop some orthogonality of the kernels themselves in the pseudo-localization arguments for the good functions  and use the microlocal decomposition and $TT^*$ arguments for the bad functions.
\end{abstract}

\maketitle

\section{Introduction}
In the last twenty years, noncommutative Calder\'on--Zygmund theory has developed rapidly, demonstrating its potential applications in operator algebras, geometric group theory and other fields (see \eg \cite{Mei07}, \cite{Par09}, \cite{MP09}, \cite{Cad18}, \cite{CXY13}, \cite{CCP22}, \cite{HLX23}, \cite{Lai24}, \cite{Del26}).
The foundational framework of noncommutative Calder\'on--Zygmund theory has now been established. For example, the  $L_p$ ($1<p\leq\infty$) and weak type $(1,1)$ boundedness of the noncommutative Hardy-Littlewood maximal operator was  investigated by Mei \cite{Mei07} in 2007.
Subsequently, Parcet \cite{Par09} in 2009 established the weak type $(1,1)$ boundedness of noncommutative Calder\'on--Zygmund operators with kernels satisfying  Lipschitz regularity conditions.
Since then, whether the regularity condition on the kernel can be relaxed to the H\"ormander condition has remained an open problem. Recently the Lipschitz regularity condition on the kernel has been weakened to the $L_2$-Dini regularity condition by Cadilhac, Conde-Alonso and Parcet in \cite{CCP22} (independently obtained by Hong, the author and Xu in \cite{HLX23}).

On the other hand, there are many classical singular integral operators without regularity conditions  (see \eg \cite{Chr88}, \cite{CR88}, \cite{Hof88}, \cite{See96}, \cite{Tao1999}, \cite{STW04}, \cite{Tao04}). It is of interest to develop noncommutative Calder\'on--Zygmund theory with rough kernels. Following this line of research, in 2024 the author \cite{Lai24}  studied  the most fundamental operators: the noncommutative maximal averaging operators with rough kernels, which are defined by
\Bes
M_rf(x)=\fr{1}{|B(x,r)|}\int_{B(x,r)}\Om(x-y)f(y)dy,
\Ees
where $\Om$ is a homogeneous function defined on $\R^d\setminus\{0\}$ of degree zero, i.e.
$
\Om(r\tet)=\Om(\tet)\label{e:19Hom}
$
for any $r>0$ and $\tet\in\S^{d-1}$. It was shown in \cite{Lai24} that $\{M_r\}_{r>0}$ satisfies a noncommutative maximal weak type $(1,1)$ estimate if $\Om\in L(\log^+ L)^2(\S^{d-1})$.

This paper is the second part of this project on noncommutative Calder\'on--Zygmund theory with rough kernels.
The previous one \cite{Lai24} mainly uses the microlocal decomposition to deal with both the good and bad functions which arise from a kind of noncommutative Calder\'on--Zygmund decomposition.
These methods are not sufficient to establish the analogous boundedness for noncommutative singular integral operators with rough kernels.
In this paper, we develop some new tools to study their weak type $(1,1)$ boundedness.

To illustrate our main result, we introduce some basic notation. Let $\M$ be a semifinite von Neumann algebra equipped with a \emph{n.s.f.} trace $\tau$ and let $\A$ be the von Neumann algebra $L_\infty(\R^d)\bar\otimes\M$ equipped with
the \emph{n.s.f.} trace $\vphi$ defined by $\vphi(f)=\int_{\R^d}\tau(f(x))dx$. For $1\leq p<\infty$, define
$L_p(\mathcal{A})$ as the noncommutative
$L_p$ space associated to the pair
$(\A,\varphi)$. For more details about noncommutative $L_p$ spaces we refer to Section \ref{s:282}.
For a function $f\in L_p(\A)$, we consider a singular integral operator formally defined by
\Be\label{e:28Tom}
T  f(x)=\int_{\R^d}K(x-y)f(y)dy
\Ee
where $K(x)={\Om(x)}/{|x|^d}$,   $\Om$ is a homogeneous function defined on $\R^d\setminus\{0\}$ of degree zero and satisfies the cancellation condition $\int_{\S^{d-1}}\Om(\tet)d\sigma(\tet)=0$. Now we state our main result as follows.

\begin{theorem}\label{t:28}
Let $T$ be defined in \eqref{e:28Tom}. Suppose that $\Om\in L(\log^+ L)^{\frac52}(\S^{d-1})$. Then the operator $T$ is of noncommutative weak type $(1,1)$, i.e.\ for any $f\in L_1(\A)$ and $\lambda>0$,
\Bes
\lam\vphi(\chi_{(\lam,+\infty)}(|Tf|))\lc \C_\Om\|f\|_{L_1(\A)},
\Ees
where $\C_\Om$ is a constant depending only on the dimension $d$ and $\Om$ (see \eqref{e:19constantom} for its definition).
\end{theorem}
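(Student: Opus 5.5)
We combine Parcet's noncommutative Calder\'on--Zygmund decomposition with the Seeger/Tao treatment of rough kernels. By density and the usual reductions we take $f\in L_1(\A)\cap L_2(\A)$ positive, compactly supported in $x$, with values in $\M$ of finite support projection. Let $\{\mathbb E_k\}$ be the dyadic conditional expectations on $\A$ and $f_k=\mathbb E_k f$. Cuculescu's construction at height $\lam$ gives a decreasing sequence of projections $q_k$ and $p_k=q_{k-1}-q_k$. We then split $f=g+b$. The good part $g=\sum_{i,j}p_if_{i\vee j}p_j+q_\infty fq_\infty$ satisfies $\|g\|_2^2\lc\lam\|f\|_1$. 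The bad part $b=\sum_{i,j}p_i(f-f_{i\vee j})p_j$ is split into diagonal and off-diagonal pieces in the usual way. We remove the projection $\zeta=\bigvee_k \bigl(\bigvee_{Q\in\Q_k}\chi_{5Q}\bigr)(1-q_k)\ldots$, the noncommutative analogue of the dilated exceptional set. It satisfies $\lam\vphi(1-\zeta)\lc\|f\|_1$, so it suffices to estimate $\zeta T(\cdot)\zeta$ for each piece.

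We decompose the kernel dyadically as $K=\sum_{s\in\Z}K_s$ with $K_s(x)=K(x)\phi(2^{-s}x)$. For the rough kernel we use the Seeger-type microlocal decomposition $K_s=\sum_\nu K_s^\nu$, adapted to directions at scale $2^{-\gamma j}$ where $j$ measures $s$ relative to the cube scale. We also split $\Om=\sum_m\Om_m$ by level sets $\Om_m=\Om\chi_{\{2^m\le|\Om|<2^{m+1}\}}$, so that each $\Om_m$ is bounded and the $L(\log^+ L)^{5/2}$ norm controls $\sum_m m^{5/2}\|\Om_m\|_1$, which defines $\C_\Om$.

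\textbf{Good part.} The $L_2$ bound of $T$, via Plancherel and $\Om\in L\log L$, together with Chebyshev handles $g$ directly. The difficulty is that Parcet's good part is not supported where classical arguments expect. Its off-diagonal terms $p_if_{i\vee j}p_j$ require pseudo-localization, namely an estimate of $\|(1-\zeta)\,T(\text{off-diagonal})\,\ldots\|_2$ in which the localization is only quasi-orthogonal. For Lipschitz kernels this was done through kernel regularity. Here we replace it by orthogonality of the kernels themselves: $\langle K_s*h_k,K_{s'}*h_{k'}\rangle$ decays like $2^{-\epsilon|s-s'|}$ times $\|\Om\|$-type factors, using Fourier decay $|\widehat{K_s}(\xi)|\lc\min(|2^s\xi|,|2^s\xi|^{-\epsilon})$ for bounded $\Om_m$. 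We then sum with almost orthogonality (Cotlar) over martingale differences, losing powers of $m$.

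\textbf{Bad part.} For the diagonal terms $\sum_k p_k(f-f_k)p_k$, the piece $b_Q$ has mean zero on cubes $Q\in\Q_k$. The small-scale part $K_s$ with $s<k+\text{const}$ is killed by $\zeta$. For $s=k+j$, $j\ge1$, we split $K_s=K_s^{\le}+K_s^{>}$ microlocally. The remainder piece satisfies an $L_1$ bound $\lc 2^{-\epsilon j}\|\Om_m\|_1$ using cancellation of $b_Q$ and the smoothness gained on each microlocal piece. The main piece is estimated in $L_2$ by a $TT^*$ argument. Geometric overlap of the supports of $K_s^\nu * b_Q$ yields $\|\sum_Q K^\nu_s*b_Q\|_2^2\lc 2^{-\epsilon j}\lam\|f\|_1$, using $\|b_Q\|_{L_1}\lc\lam|Q|$ operator-valued. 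Since the $b_Q$ take values in the noncommutative algebra, $TT^*$ must be run on $\sum(K*b_Q)^*(K*b_Q)$ with traces, which works because $b_Q$ are sandwiched by orthogonal $p_k$. Summation in $j$ and $m$ costs $m^{3/2}$ or so. This, together with the $m$ losses in the good part, explains the exponent $5/2$.

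\textbf{Main obstacle.} The hardest step is the off-diagonal terms $p_i(f-f_{i\vee j})p_j$ with $i\ne j$, where neither $\zeta$-cancellation nor direct $L_2$ bounds suffice. Here I would first try to reduce to a pseudo-localization statement of the type in \cite{CCP22,HLX23}: an $L_2$ estimate $\|(1-\zeta)T(\sum_k d_k)\|$ controlled by $\sum_j 2^{-\epsilon j}$ times a square function. I would prove it for each $\Om_m$ via the kernel orthogonality above, accepting a polynomial loss in $m$. The final step is to collect the weak-type contributions and sum over $m$ with the $L(\log^+L)^{5/2}$ hypothesis.
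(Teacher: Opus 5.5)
Your proposal has genuine gaps, at exactly the two places where the noncommutative problem departs from Seeger's argument.

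\emph{Good part.} You claim $\|g\|_{L_2(\A)}^2\lesssim\lam\|f\|_{L_1(\A)}$ for Parcet's good function, but this bound is not available. Only the diagonal part $g_d$ has $\|g_d\|_1\lesssim\|f\|_1$ and $\|g_d\|_\infty\lesssim\lam$. For $g_{\off}=\sum_{s\ge1}g_{(s)}$ one has only $\sup_s\|g_{(s)}\|_2^2\lesssim\lam\|f\|_1$. The $g_{(s)}$ are mutually orthogonal, so $\|g_{\off}\|_2^2=\sum_s\|g_{(s)}\|_2^2$, and nothing controls this sum over $s$. (If your bound held, Chebyshev would finish the good part and no pseudo-localization would be needed, which contradicts your next sentence.)

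What must be shown is this: after $\zeta$ annihilates $\zeta T_jg_{n-j,s}\zeta$ for $n<100$, one has $\|\sum_jT_jg_{n-j,s}\|_2^2\lesssim D(s,n)\lam\|f\|_1$ with $\sum_{s\ge1,\,n\ge100}D(s,n)^{1/2}<\infty$. That is, you need decay in both $n$ and $s$. The paper gets this with Littlewood--Paley pieces $\Lambda_k$:
\begin{itemize}
\item for $j<k+(n+s)\epsilon$, it combines the martingale-difference bound $\|\Lambda_kg_{n-j,s}\|_2^2\lesssim2^{j-n-s-k}\|g_{n-j,s}\|_2^2$ with $\sum_j|\widehat{K_j}|^2\lesssim\C_\Om^2$;
\item for $j\ge k+(n+s)\epsilon$, it uses the decay $(2^j|\xi|)^{-\delta}$ of $\widehat{K_j}$ after truncating $\Om$ at height $2^{(j-k)\nu}\|\Om\|_1$. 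The discarded tail gives $B(s,n)$ and the $\log^{5/2}$ loss.
\end{itemize}

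Your level-set route already fails at the bound $|\widehat{K_s}(\xi)|\lesssim|2^s\xi|$. That bound needs $\int_{\S^{d-1}}\Om_m\,d\sigma=0$, and the pieces $\Om_m$ have no cancellation. Without it, $\sum_s|\widehat{K_s}|^2$ diverges, and the operator with kernel $\Om_m(x/|x|)|x|^{-d}$ is not even $L_2$-bounded; you would at least have to subtract means. Also, weak-type bounds cannot simply be summed over $m$, since $L_{1,\infty}$ is only quasi-normed. The paper avoids this by using scale-dependent truncations ($2^{\iota(n+s)}\|\Om\|_1$ and $2^{(j-k)\nu}\|\Om\|_1$). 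It sums all $L_1$ pieces in $L_1$ and all $L_2$ pieces in $L_2$, then applies Chebyshev once. It also runs Cuculescu at height $\lam\C_\Om^{-1}$ rather than $\lam$, which is what makes the final constant linear in $\C_\Om$.

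\emph{Bad part.} You place the main obstacle in the off-diagonal terms $p_i(f-f_{i\vee j})p_j$ and propose an $L_2$ pseudo-localization for them. This cannot work: $b$ is controlled only in $L_1$ ($\sum_k\|b_{k,s}\|_1\lesssim\|f\|_1$), so no $L_2$ or square-function estimate in terms of $b$ gives a weak $(1,1)$ bound. Localization is also not the problem. Since $p_{n-j}$ sits on one side of $b_{n-j,s}$, property (ii) of $\zeta$ gives $\zeta T_jb_{n-j,s}\zeta=0$ for $n<100$, exactly as for $s=0$. The large-$\Om$ part is trivial in $L_1$. The $({\rm I}-G_{n+s,v})$ part goes through in $L_1$ using the cancellation of $b_{k,s}$ on cubes of $\Q_{k+s}$.

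The real difficulty is the $L_2$ bound for the main microlocal piece, $\|\sum_jT_j^{n,s,v}p_{n-j}fp_{n-j+s}\|_2^2$, where the function is cut by projections at two different scales. Your remark that $TT^*$ works ``because $b_Q$ are sandwiched by orthogonal $p_k$'' covers only the symmetric diagonal terms. Moreover, $p_Q(f-f_Q)p_Q$ is not positive, so $f$ and $f_{k+s}$ must be treated separately.

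The missing idea is to split $\sum_{i,j}\tau\int(T_j^{n,s,v}\,\cdot)^*T_i^{n,s,v}\,\cdot$ into the ranges $|i-j|>s$ and $|i-j|\le s$.
\begin{itemize}
\item For $i<j-s$, use $L_1$--$L_\infty$ H\"older with $\sum_j\|p_{n-j+s}fp_{n-j}\|_1\le\|f\|_1$. Then the Hilbert-valued H\"older inequality reduces matters to $\sum_{i<j-s}|\tilde K_j^{n,s,v}*K_i^{n,s,v}|*(p_{n-i}fp_{n-i})$ and its analogue with $p_{n-i+s}$. These are bounded in $\A$ via $p_Qf_Qp_Q\le\lam\C_\Om^{-1}p_Q$, $\sum_ip_{n-i}\le1_\A$, and the fact that cubes of side $2^{i-n}<2^{j-n-s}$ fit inside the tube of width $\approx2^{j-(n+s)\ga}$. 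This last step is exactly where $i<j-s$ is needed.
\item For $|i-j|\le s$, pay a factor $s+1$ and estimate each $\|T_j^{n,s,v}p_{n-j}fp_{n-j+s}\|_2^2$ directly. This uses the averaged kernel bound and $\|p_{n-j+s}f_{n-j+s}p_{n-j+s}\|_\infty\lesssim\lam\C_\Om^{-1}$.
\end{itemize}
Without this split, the $L_2$ estimate for the bad function is not established.
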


\begin{remark}
It was shown by Seeger \cite{See96} that the condition $\Om\in L\log^+L(\S^{d-1})$ guarantees the weak type $(1,1)$ boundedness of the classical (commutative) operator $T$.  Compared with this classical result, the condition $\Om\in L(\log^+L)^{\fr52}(\S^{d-1})$ appearing in our main theorem is stronger; nevertheless, it is natural in view of recent related results for the noncommutative maximal operator.
\end{remark}

There are two distinct noncommutative Calder\'on--Zygmund decompositions appearing in \cite{Par09} and \cite{CCP22}. The newer one in \cite{CCP22} is more efficient for handling Calder\'on--Zygmund operators whose kernels satisfy some regularity conditions, but it is more difficult to apply to our problem. In particular, it may not work within the \(TT^*\) argument, which is a standard tool in the study of rough singular integral operators.
We therefore use the original noncommutative Calder\'on--Zygmund decomposition from \cite{Par09}, following the approach used for the maximal averaging operator \(\{M_r\}_{r>0}\) in \cite{Lai24}. The main strategy in \cite{Lai24} is to study a linearized singular integral operator
\[
\tilde{T}f(x,z) = \sum_{j \in \mathbb{Z}} T_j f(x) \,\varepsilon_j(z),
\]
where \(T_j\)'s are the dyadic decomposition operators of \(T\) and \(\{\varepsilon_j\}_j\) is a Rademacher sequence on a probability space \((\mathfrak{m}, P)\). Roughly speaking, we employ a microlocal decomposition to treat both the good and bad functions. In particular, the orthogonal identity
\[
\Big\| \sum_{j \in \mathbb{Z}} \varepsilon_j a_j \Big\|_{L_2(L_\infty(\mathfrak{m}) \otimes \mathcal{A})}^2 = \sum_j \|a_j\|_{L_2(\mathcal{A})}^2
\]
plays a key role in the argument for both the good and bad functions.

In this paper, we study the operator $T = \sum_j T_j$; therefore the crucial orthogonal identity mentioned above is absent, which leads to several new difficulties. Our main strategy is as follows. For the good functions, we do not employ the microlocal decomposition. Instead, we develop the orthogonality from rough kernels themselves within a pseudo-localization argument. This is achieved via the Littlewood--Paley decomposition, where the cancellation condition $\int_{\mathbb{S}^{d-1}} \Omega(\theta) \, d\sigma(\theta) = 0$ is crucial. We refer to Section \ref{s:196} for further details.
For the bad functions, we adapt the microlocal decomposition, and the key point is to consider the following $L_2$ estimate of the off-diagonal terms
\[
\big\| \sum_j T_j^{n,s,v} p_{n-j} f p_{n-j+s} \big\|_{L_2(\mathcal{A})}^2
\]
(see Lemma \ref{L:28fnjs}). In particular, it is very hard to deal with the non-symmetric functions $p_{n-j} f p_{n-j+s}$, which involve two different scales of dyadic cubes: $\mathcal{Q}_{n-j}$ and $\mathcal{Q}_{n-j+s}$. To overcome this difficulty, we split the sum of $(T_j^{n,s,v})^* T_i^{n,s,v}$ into two cases: $|i-j| \le s$ and $|i-j| > s$. This split is not necessary in classical arguments (see \eg \cite{See96}) but turns out to be a critical step in the noncommutative setting. After that, we develop a full $TT^*$ argument to handle both cases to incorporate our noncommutative framework. We refer to Section \ref{s:195} for more details.  Incidentally, the  method presented in this paper  also gives  a slightly new  proof of weak type $(1,1)$ boundedness for $\{M_r\}_{r\geq0}$.

\medskip

\subsection*{Outline of the paper} In Section \ref{s:282}, we give some preliminaries on noncommutative analysis, mainly noncommutative $L_p$ spaces and some useful inequalities. We will introduce the noncommutative Calder\'on--Zygmund decomposition and finish the proof of our main theorem based on several propositions in Section \ref{s:283}. The proofs of propositions related to the good and bad functions are  presented in Section \ref{s:196} and Section \ref{s:195}, respectively.

\vskip0.24cm

\section {Preliminaries}\label{s:282}
\vskip0.24cm
\subsection{Basic Notation}Throughout this paper, we only consider the dimension $d\ge2$, and the letter $C$ stands for a positive constant that is independent of the essential variables, not necessarily the same in each occurrence. $A\lc B$ means $A\leq CB$ for some constant $C$. By the notation $C_\eps$  we mean that the constant depends on the parameter $\eps$. $A\approx B$ means that $A\lc B$ and $B\lc A$.
$\Z_+$ denotes the set of all nonnegative integers and $|x|$ denotes the $\ell_2$ norm. For $s\in\R_+$, $[s]$ denotes the integer part of $s$.
For any finite set $A$, we denote by $\card(A)$ or $\#(A)$ the number of elements in $A$.
Let $s\geq0$, we define $$\|\Om\|_{L(\log^+\!\!L)^s}:=\int_{\S^{d-1}}|\Om(\tet)|(1+\log^+(|\Om(\tet)|))^s d\si(\tet),$$
where $d\si(\tet)$ denotes the sphere measure of $\S^{d-1}$. When $s=0$, we use the standard notation $\|\Om\|_{1}:=\|\Om\|_{L(\log^+\!\!L)^0}$.

Define $\mathcal{F}f$ (or $\hat{f}$) and $\mathcal{F}^{-1}f$ (or $\check{f}$) as the Fourier transform and the inverse Fourier transform
of $f$ by
$$\mathcal{F}f(\xi)=\int_{\R^d} e^{-i\inn{x}{\xi}}f(x)dx,\ \ \ \ \mathcal{F}^{-1}f(\xi)=\fr{1}{(2\pi)^{d}}\int_{\R^d}e^{i\inn{x}{\xi}}{f(x)dx}.$$

\subsection{Noncommutative $L_{p}$-spaces}
Let $\M$ be a semifinite von Neumann algebra equipped with a normal semifinite faithful (\emph{n.s.f.} for short) trace $\tau$.
Denote by $\M_{+}$ the positive part of $\M$ and let $\mathcal{S}_{\M+}$  be the set of all $x\in\M_{+}$ whose support projection has finite trace. Let $\mathcal{S}_{\M}$ be the linear span of $\mathcal{S}_{\M+}$; then $\mathcal{S}_{\M}$ is a $w^{*}$-dense $\ast$-subalgebra of $\M$. Let $0< p<\infty$. For any $x\in\mathcal{S}_{\M}$, $|x|^{p}\in\mathcal{S}_{\M}$ and we set
$$\|x\|_{p}=\big(\tau(|x|^p)\big)^{1/p},\ \ x\in\mathcal{S}_{\M}.$$
Here $|x|=(x^{\ast}x)^{\frac{1}{2}}$ is the modulus of $x$. Define the noncommutative $L_{p}$-space associated with $(\M,\tau)$ as the completion of $(\mathcal{S}_{\M},\|\cdot \|_{p})$, and it is denoted by $L_{p}(\M)$. For convenience, we set $L_{\infty}(\M) = \M$ equipped with the operator norm $\|\cdot \|_{\M}$. Let $L_{p}(\M)_{+}$ denote the positive part of $L_{p}(\M)$.

Suppose that $\M\subset B(\mathcal{H})$ acts on a separable Hilbert space $\mathcal{H}$. Let $\M'$ be the commutant of $\M$. A closed densely defined operator on $\mathcal{H}$ is said to be affiliated with $\M$ when it commutes with every unitary operator $u$ in $\M'$. If $x$ is a densely defined self-adjoint operator on $\mathcal{H}$
and $x = \int_{\R} \lambda \hskip1pt d \gamma_x(\lambda)$ is its corresponding spectral
decomposition, then the spectral projection $\int_{\mathcal{I}} d
\gamma_x(\lambda)$ will simply be denoted by $\chi_{\mathcal{I}}(x)$, where $\mathcal{I}$ is a measurable subset of $\R$. A closed and densely defined operator
$x$ affiliated with $\mathcal{M}$ is called \emph{$\tau$-measurable} if
there exists $\lambda > 0$ such that $$\tau \big( \chi_{(\lambda,\infty)}
(|x|) \big) < \infty.$$
We denote the $\ast$-algebra of \emph{$\tau$-measurable} operators by $L_{0}(\M)$.
For $1\leq p<\infty$, the weak $L_{p}$-space $L_{p,\infty}(\M)$ is defined as the set of all $x$ in $L_0(\M)$ with finite quasi-norm
$$\|x\|_{p,\infty}=\sup_{\lambda > 0}\lambda\,\tau \big( \chi_{(\lambda,\infty)}
(|x|) \big)^{\frac{1}{p}}<\infty.$$
We refer the reader to \cite{PX03} for a detailed exposition of noncommutative $L_{p}$-spaces.

\subsection{Vector-valued noncommutative $L_{p}$-spaces}
We first recall the column space. Let $(\Sigma,\mu)$ be a measure space. The column space $L_p(\M;L^c_2(\Sigma))$ consists of the operator-valued functions $f$ with finite norm for $p\geq1$ (quasi-norm for $0<p<1$)
$$\|f\|_{L_p(\M;L^c_2(\Sigma))}=\Big\|\Big(\int_{\Sigma}f^*(\omega)f(\omega)d\mu(\omega)\Big)^{\frac12}\Big\|_{p}<\infty.$$
We refer the reader to \cite{PX03} for the precise definition and related properties of Hilbert-valued operator spaces.
The most important property for our purpose is the following Hilbert-valued H\"older type inequality (see \eg \cite[Proposition 1.1]{Mei07}).
\begin{lemma}\label{l:28holder}\rm
Let $0<p,q,r\leq\infty$ with $1/r=1/p+1/q$. Then for any $f\in L_p(\M;L^c_2(\Sigma))$ and $g\in L_q(\M;L^c_{2}(\Sigma))$
$$\Big\|\int_{\Sigma}f^*(\omega)g(\omega)d\mu(\omega)\Big\|_{r}\leq \Big\|\Big(\int_{\Sigma}|f(\omega)|^2d\mu(\omega)\Big)^{\frac12}\Big\|_{p}
\Big\|\Big(\int_{\Sigma}|g(\omega)|^2d\mu(\omega)\Big)^{\frac12}\Big\|_{q}.$$
\end{lemma}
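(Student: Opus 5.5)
The statement just given is Lemma~\ref{l:28holder}, the Hilbert-valued H\"older inequality. I will reduce it to the ordinary noncommutative H\"older inequality by realizing column functions as genuine operators in a larger von Neumann algebra. Fix an orthonormal basis of $L_2(\Sigma)$, or simply use the isometric identification of $L_2(\Sigma)$ with a column Hilbert space. This embeds $L_p(\M;L^c_2(\Sigma))$ isometrically into $L_p(B(L_2(\Sigma))\bar\otimes\M)$. Under this embedding $f$ corresponds to the column operator $C_f=\sum_k e_{k1}\otimes f_k$, where $f_k=\int_\Sigma f(\omega)\overline{h_k(\omega)}\,d\mu(\omega)$ for an orthonormal basis $(h_k)$ of $L_2(\Sigma)$.

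First I check the identities $C_f^*C_f=e_{11}\otimes\sum_k f_k^*f_k=e_{11}\otimes\int_\Sigma f^*f\,d\mu$ and $C_f^*C_g=e_{11}\otimes\int_\Sigma f^*g\,d\mu$. Both follow from Parseval's identity applied weakly, first on simple or finitely supported $f,g$ with values in $\mathcal S_\M$, where all the sums are finite or norm convergent. Hence
\[
\|C_f\|_{L_p(B\bar\otimes\M)}=\big\|(C_f^*C_f)^{1/2}\big\|_p=\Big\|\Big(\int_\Sigma|f|^2d\mu\Big)^{1/2}\Big\|_{L_p(\M)} .
\]
The trace on $B(L_2(\Sigma))\bar\otimes\M$ is $\mathrm{tr}\otimes\tau$, and $e_{11}\otimes x$ has the same $L_r$ norm as $x$.

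Next I apply the standard noncommutative H\"older inequality $\|xy\|_r\le\|x\|_p\|y\|_q$ for $1/r=1/p+1/q$, valid for all $0<p,q,r\le\infty$ on $\tau$-measurable operators. With $x=C_f^*$ and $y=C_g$, and using $\|C_f^*\|_p=\|C_f\|_p$, this gives
\[
\Big\|\int_\Sigma f^*g\,d\mu\Big\|_r=\|C_f^*C_g\|_r\le\|C_f\|_p\|C_g\|_q ,
\]
which is exactly the claimed bound.

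The only delicate point is justifying the identities for general $f,g$ in the column spaces, especially in the quasi-Banach range $p<1$, where duality is unavailable. I would prove everything on the dense class of finite sums $\sum_i\chi_{A_i}\otimes x_i$ with $\mu(A_i)<\infty$ and $x_i\in\mathcal S_\M$. The general case then follows by density, since the right side is continuous in the (quasi-)norms and the map $(f,g)\mapsto\int f^*g$ extends continuously by the estimate itself. This extension step is the main obstacle, but it is routine; alternatively one can cite \cite[Proposition 1.1]{Mei07} directly.
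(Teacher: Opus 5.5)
Your argument is correct. The paper does not prove this lemma; it simply quotes \cite[Proposition 1.1]{Mei07}. Realizing $f,g$ as column operators $C_f,C_g$ in $B(L_2(\Sigma))\bar\otimes\M$ with $C_f^*C_g=e_{11}\otimes\int_\Sigma f^*g\,d\mu$, and then applying the noncommutative H\"older inequality, is the standard proof of that result.

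One point needs tidying: the endpoint $p=\infty$ or $q=\infty$, which the paper actually uses (e.g.\ in \eqref{e:28sumijs2}). There simple functions are not norm-dense in the column space, so you should verify the identity for $C_f^*C_g$ directly by your weak Parseval computation. That needs no approximation, since all the operators involved are bounded.
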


We also need the following convexity inequality (or the Cauchy-Schwarz type inequality) for operator-valued functions (see \cite[Page 9]{Mei07}). Let $(\mathfrak{m},\mu)$ be a measure space. Suppose that $f:\ \mathfrak{m}\rta\M$ is a weak-$*$ integrable function and $g:\mathfrak{m}\rta\mathbb C$ is an integrable function. Then
\Be\label{e:28conv}
\Big|\int_{\mathfrak{m}}f(x)g(x)d\mu(x)\Big|^2\leq\int_{\mathfrak{m}}|f(x)|^2d\mu(x)\int_{\mathfrak{m}}|g(x)|^2d\mu(x).
\Ee

\subsection{Conditional expectations and martingale differences}
Let $\A$ be the von Neumann algebra $L_\infty(\R^d)\bar\otimes\M$ equipped with
the \emph{n.s.f.} trace $\vphi(f)=\int_{\R^d}\tau(f(x))dx$. For $1\leq p<\infty$, define
$L_p(\mathcal{A})$ as the noncommutative
$L_p$ space associated to the pair
$(\A,\varphi)$
with respect to the following norm
\Be\label{e:19deflp}
\|f\|_{L_p(\A)}= \Big( \int_{\R^d} \tau \,
\big( |f(x)|^p \big) \, dx \Big)^{\frac1p}.
\Ee
From \eqref{e:19deflp} we see that
$L_p(\mathcal{A})$ is isomorphic to the Bochner $L_p$ space with
values in $L_p(\mathcal{M})$. For convenience, we set $L_\infty(\A)=\A$ equipped with the operator
norm.
The lattices of projections are written as $\M_p$ and $\A_p$,
while $1_{\M}$ and $1_{\A}$ stand for the unit elements in $\M$ and $\A$. Let $L_p(\A)_+$ be the positive part of $L_p(\A)$. Notice that $L_2(\M)$ is a Hilbert space; then the following vector-valued Plancherel theorem will be frequently used in this paper
\Bes
\|\mathcal{F}f\|_{L^2(\A)}=(2\pi)^{\frac{d}{2}}\|f\|_{L^2(\A)}=(2\pi)^d\|\mathcal{F}^{-1}f\|_{L^2(\A)}.
\Ees

Let $\Q$ be the set of all dyadic cubes in $\R^d$. For any $Q\in\Q$, denote by $\ell(Q)$ the side length of the cube $Q$. Let $s Q$ be the cube with the same center as $Q$ such that $\ell(s Q)=s\ell(Q)$. Given an integer $k \in \Z$, $\Q_k$ will be defined as the set of dyadic cubes of side length $2^{-k}$. Let $|Q|$ be the volume of the cube $Q$. If $Q\in\Q$ and $f: \R^d \to
\M$ is integrable on $Q$, we define its average as
$f_Q = |Q|^{-1} \int_Q f(y) \, dy.$

For $k\in\Z$, set $\sigma_{k}$ as the $k$-th dyadic $\sigma$-algebra, i.e., $\sigma_{k}$ is generated by the dyadic cubes with side lengths equal to $2^{-k}$. Let $\mathsf{E}_k$ be the
conditional expectation associated with the classical dyadic
filtration $\sigma_{k}$ on $\R^d$. We also use $\mathsf{E}_k$ for the
tensor product $\mathsf{E}_k \otimes id_\M$ acting on $\A$. Then for  $1
\le p < \infty$ and $f \in L_p(\A)$, we have
$$\mathsf{E}_k(f) = \sum_{Q \in \Q_k}^{\null} f_Q \chi_Q,$$
where $\chi_Q$ is the characteristic function of $Q$.
Similarly, $\{\A_k\}_{k \in \Z}$ will stand for the corresponding
filtration, i.e.\ $\A_k = \mathsf{E}_k(\A)$. For simplicity, we write the conditional expectation $f_{k}:=\mathsf{E}_k(f)$ and the martingale difference $\Delta_{k} (f):=f_{k}-f_{k-1}=: df_{k}$.
\vskip0.24cm

\section {Proof of Theorem \ref{t:28}}\label{s:283}
\vskip0.24cm
The proof of Theorem \ref{t:28} is presented in this section, relying on several propositions. Their proofs are postponed to Sections \ref{s:196} and \ref{s:195}. We begin by introducing the noncommutative Calder\'on--Zygmund decomposition.

\subsection{Noncommutative Calder\'on--Zygmund decomposition}\label{s:1941}\quad
\vskip0.24cm

Since any $f$ can be expressed as a linear combination of four positive operators, it is enough to handle a positive $f$ belonging to $L_1(\A)$. A standard density argument allows us to restrict attention to the following dense subclass of $L_1(\A)_+$:
$$
\A_{c,+}=\{f:\R^d\rta\M\ | f\in\A_+,\ \text{$\overrightarrow{\rm {supp}}\,f$ is compact}\}.
$$
Here $\overrightarrow{\rm {supp}}\,f$ denotes the support of $f$ which is viewed as an operator-valued function on $\R^d$. That is, $\overrightarrow{\rm {supp}}\ f=\{x\in\R^d: \|f(x)\|_{\M}\neq0\}$. Suppose $\Om\in L(\log^+L)^{\frac52}(\S^{d-1})$. Define the constant
\Be\label{e:19constantom}
\mathcal{C}_\Om=\|\Om\|_{L(\log^+L)^{\frac52}}
+\int_{\S^{d-1}}|\Om(\tet)|\big(1+[\log^+({|\Om(\tet)|}/{\|\Om\|_{1}})]^{\frac52}\big)d\si(\tet),
\Ee
where $\log^+a=0$ for $0<a<1$ and $\log^+a=\log a$ for $a\geq1$. The finiteness of $\|\Om\|_{L(\log^+L)^{\frac52}}$ guarantees that $\mathcal{C}_\Om$ is finite. Now fix $f\in \A_{c,+}$ and define $f_k=\mathsf{E}_kf$ for each $k\in\Z$. Then $\{f_k\}_{k\in\Z}$ forms a positive dyadic martingale in $L_1(\A)$. Applying the well-known Cuculescu construction from \cite[Lemma 3.1]{Par09} at level ${\lam}{\C_\Om^{-1}}$ yields the following result.

\begin{lemma}\label{l:28cucu}
There exists a decreasing sequence $\{q_k\}_{k\in\Z}$, depending on $f$ and ${\lam}{\C_\Om^{-1}}$, with each $q_k$ a projection in $\A_p$ such that:
\begin{enumerate}[\rm (i).]
\item $q_k$ commutes with $q_{k-1}f_kq_{k-1}$ for all $k\in\Z$;
\item $q_k$ lies in $\A_k$ for every $k\in\Z$ and satisfies $q_kf_kq_k\leq{\lam}{\C_\Om^{-1}}q_k$;
\item Let $q=\bigwedge_{k\in\Z}q_k$. Then
\Bes
\vphi(1_\A-q)\leq\lam^{-1}\C_\Om\|f\|_{L_1(\A)};
\Ees
\item The projections $q_k$ can be expressed as follows: for some negative integer $m\in\Z$,
\Bes
q_k=\begin{cases}
1_\A\ \ &\qquad\text{if\ $k<m$,} \\
\chi_{(0,\lam\C_\Om^{-1}]}(f_k)\ &\qquad\text{if\ $k=m$},\\
\chi_{(0,\lam\C_\Om^{-1}]}(q_{k-1}f_kq_{k-1}) &\qquad\text{if\ $k>m$}.
\end{cases}
\Ees
\end{enumerate}
\end{lemma}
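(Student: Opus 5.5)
This is the Cuculescu construction from \cite[Lemma 3.1]{Par09}, applied to the positive dyadic martingale $\{f_k\}_{k\in\Z}$ at level $\lam\C_\Om^{-1}$. The plan is to check that the hypotheses of that lemma hold and then verify (i)--(iv) directly from the explicit formula in (iv).

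\textbf{Choice of $m$.} Since $f\in\A_{c,+}$ has compact support $K$, for $k$ very negative $K$ lies in the union of at most $2^d$ dyadic cubes of side $2^{-k}$. Each such cube $Q$ satisfies
\[
\|f_Q\|_\M\le |Q|^{-1}\|f\|_{L_1(L_\infty(\R^d);L_\infty(\M))}\le 2^{kd}|K|\,\|f\|_\A .
\]
Hence there is a negative integer $m$ with $f_k\le \lam\C_\Om^{-1}1_\A$ for all $k<m$. Setting $q_k=1_\A$ for $k<m$ is then consistent with (ii).

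\textbf{Recursive definition and (i), (ii).} For $k\ge m$ we define $q_k$ by the formula in (iv). This is done by spectral calculus in the von Neumann algebra $\A_k$: the operator $q_{k-1}f_kq_{k-1}$ lies in $\A_k$, because $q_{k-1}\in\A_{k-1}\subset\A_k$ and $f_k\in\A_k$. Therefore its spectral projections lie in $\A_k$.

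Property (i) holds because a spectral projection commutes with the operator it comes from.

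For (ii), first note that $q_k\le q_{k-1}$. Indeed, $q_{k-1}f_kq_{k-1}$ vanishes on the range of $1-q_{k-1}$, and $0\notin(0,\lam\C_\Om^{-1}]$. I expect the proof should actually use $[0,\lam\C_\Om^{-1}]$ intersected with $q_{k-1}$, i.e.\ $q_k=q_{k-1}\chi_{[0,\lam\C_\Om^{-1}]}(q_{k-1}f_kq_{k-1})$; this is the convention of \cite{Par09}, and I would follow it there. Then
\[
q_kf_kq_k=q_k(q_{k-1}f_kq_{k-1})q_k\le \lam\C_\Om^{-1}q_k ,
\]
using commutation and the spectral bound. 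This gives (ii), and the sequence $\{q_k\}$ is decreasing by construction.

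\textbf{The trace estimate (iii).} This is the main step. Write
\[
1_\A-q=\sum_{k}(q_{k-1}-q_k),
\]
a sum of disjoint projections in $\A_k$. On the range of $p_k:=q_{k-1}-q_k$, the operator $q_{k-1}f_kq_{k-1}$ is $>\lam\C_\Om^{-1}$, and $p_k$ commutes with it. Hence
\[
\lam\C_\Om^{-1}\vphi(p_k)\le\vphi(p_kf_kp_k)=\vphi(p_kf),
\]
where the last equality uses $p_k\in\A_k$ and the trace property of the conditional expectation $\mathsf E_k$. Summing over $k$ (a finite sum for truncations, then a normal limit) gives
\[
\lam\C_\Om^{-1}\vphi(1_\A-q)\le\vphi\big((1_\A-q)f\big)=\vphi\big(f^{1/2}(1_\A-q)f^{1/2}\big)\le\vphi(f)=\|f\|_{L_1(\A)} .
\]
The only delicate points are the passage to the infinite sum, handled by normality of $\vphi$, and justifying $\vphi(pf_kp)=\vphi(pf)$ for $p\in\A_k$, which follows since $\mathsf E_k$ is trace preserving and $\A_k$-bimodular. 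This yields (iii).
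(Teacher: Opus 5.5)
Your proposal is correct and is the Cuculescu construction that the paper imports from \cite[Lemma 3.1]{Par09}: recursive spectral projections in $\A_k$, commutation giving (i)--(ii), and the bound $\lam\C_\Om^{-1}\vphi(p_k)\le\vphi(p_kf_kp_k)=\vphi(p_kf)$, summed using normality of $\vphi$, for (iii). Your replacement of the formula in (iv) by $q_k=q_{k-1}\chi_{[0,\lam\C_\Om^{-1}]}(q_{k-1}f_kq_{k-1})$ (so that $q_m=\chi_{[0,\lam\C_\Om^{-1}]}(f_m)$) is a necessary correction, not a matter of convention: with the half-open interval $(0,\lam\C_\Om^{-1}]$, $q_m$ vanishes off the finitely many cubes of $\Q_m$ that meet the support of $f$, so $\vphi(1_\A-q)=\infty$ and (iii) fails.
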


As in \cite{Par09}, we next give an alternative description of the projections $q_k$. This description will prove useful when estimating quantities involving $q_k$. Indeed, for every $k\in\Z$ we can write
$
q_k=\sum_{Q\in\mathcal{Q}_k}\xi_Q\chi_{Q},
$
where each $\xi_Q$ is a projection in $\M$ such that
\begin{enumerate}[\rm (i).]
\item An explicit formula holds (with $\widehat{Q}$ denoting the father dyadic cube of $Q$):
\Bes
\xi_Q=\begin{cases}
1_\M\ \ &\qquad\text{if $k<m$,} \\
\chi_{(0,\lam\C_\Om^{-1}]}(f_Q)\ &\qquad\text{if $k=m$},\\
\chi_{(0,\lam\C_\Om^{-1}]}(\xi_{\widehat{Q}}f_Q\xi_{\widehat{Q}}) &\qquad\text{if $k>m$}.
\end{cases}
\Ees
\item $\xi_Q\in\M_p$ and $\xi_Q\leq\xi_{\widehat{Q}}$;
\item $\xi_Q$ commutes with $\xi_{\widehat{Q}}f_Q\xi_{\widehat{Q}}$ and $\xi_Qf_Q\xi_Q\leq \lam\C_\Om^{-1}\xi_Q$.
\end{enumerate}

Set $p_k=q_{k-1}-q_k$. Using the more explicit representation above, we obtain
$
p_k=\sum_{Q\in\Q_k}(\xi_{\widehat{Q}}-\xi_Q)\chi_Q=:\sum_{Q\in\Q_k}{ p_Q\chi_Q}
$
with $ p_Q=\xi_{\widehat{Q}}-\xi_Q$.
It is readily verified that the $p_k$ are pairwise disjoint and satisfy $\sum_{k\in\Z}p_k=1_\A-q$.

The associated good and bad functions of $f$ are then defined by
\Bes
f=g+b,\quad g=\sum_{i,j\in\widehat{\Z}}p_if_{i\vee j}p_j,\quad b=\sum_{i,j\in\widehat{\Z}}p_i(f-f_{i\vee j})p_j,
\Ees
where we put $p_\infty=q$, $\widehat{\Z}=\Z\cup\{\infty\}$ and $i\vee j=\max (i,j)$. When $i$ or $j$ equals $\infty$, $i\vee j$ is understood to be $\infty$ and $f_\infty:=f$. From the linearity of $T$ we deduce
\Bes
{\vphi}(|Tf|>\lam)\leq{\vphi}(|Tg|>\lam/2)+{\vphi}(|Tb|>\lam/2).
\Ees

The proofs of estimates for the good and bad functions will be given separately. Before that, we state a lemma that provides a projection in $\A$ allowing us to reduce the problem to the case where all operators are restricted to that projection.
\begin{lemma}\label{l:19excep}
There exists a projection $\zet\in\A_p$ with the following properties:
\begin{enumerate}[\rm (i).]
\item $\lam\vphi(1_\A-\zet)\lc\C_\Om\|f\|_{L_1(\A)}$.
\item For any $Q_0\in\Q$ and any $x\in (2^{101}+1)Q_0$, we have $\zet(x)\leq1_\M-\xi_{\widehat{Q_0}}+\xi_{Q_0}$ and $\zet(x)\leq\xi_{Q_0}$.
\end{enumerate}
\end{lemma}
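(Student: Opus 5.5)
This is the analogue of Parcet's construction of the exceptional projection \cite{Par09}. The natural candidate is
\[
\zet=1_\A-\bigvee_{Q\in\Q}\big(1_\M-\xi_{Q}\big)\chi_{(2^{101}+1)Q}.
\]
With this definition, property (ii) is almost immediate. Fix $Q_0\in\Q$ and $x\in(2^{101}+1)Q_0$. The supremum defining $1_\A-\zet$ includes the term for $Q_0$, so $(1_\A-\zet)(x)\ge 1_\M-\xi_{Q_0}$, and hence $\zet(x)\le\xi_{Q_0}$. Since $\xi_{Q_0}\le\xi_{\widehat{Q_0}}$, the operator $1_\M-\xi_{\widehat{Q_0}}+\xi_{Q_0}$ is a projection dominating $\xi_{Q_0}$. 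So the second inequality $\zet(x)\le1_\M-\xi_{\widehat{Q_0}}+\xi_{Q_0}$ follows from the first. The pointwise statement uses that the supremum of projections in $\A=L_\infty(\R^d)\bar\otimes\M$ is computed pointwise in $x$, up to null sets.

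The main work is (i). I would use $\vphi(\bigvee_\alpha e_\alpha)\le\sum_\alpha\vphi(e_\alpha)$ together with a telescoping decomposition. Write
\[
1_\M-\xi_Q=\sum_{k\le k_Q}p_{R_k(Q)},
\]
where $k_Q$ is the generation of $Q$ and $R_k(Q)$ is the dyadic ancestor of $Q$ in $\Q_k$. This holds because $p_R=\xi_{\widehat R}-\xi_R$ and $\xi=1_\M$ at generations below $m$. Summing over all $Q$ naively double counts, so I would reorganize the supremum by ancestors. For each $R\in\Q_k$, every descendant $Q$ of $R$ satisfies $(2^{101}+1)Q\subset(2^{101}+1)R$ up to a fixed dilation (both are contained in $(2^{101}+2)R$, say). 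Therefore
\[
1_\A-\zet\le\bigvee_{k}\bigvee_{R\in\Q_k}p_R\,\chi_{(2^{101}+2)R}.
\]
To justify this, note that $1_\M-\xi_Q=\bigvee_{R\supseteq Q}p_R$ as a sum of orthogonal projections, and the supremum of $e\chi_A$ over $e\le\bigvee_R p_R$ with $A\subset$ the corresponding sets is controlled term by term. Subadditivity then gives
\[
\vphi(1_\A-\zet)\le\sum_{k}\sum_{R\in\Q_k}(2^{101}+2)^d|R|\,\tau(p_R)=(2^{101}+2)^d\sum_k\vphi(p_k)=C_d\,\vphi(1_\A-q).
\]
Lemma \ref{l:28cucu}(iii) then yields $\lam\vphi(1_\A-\zet)\lc\C_\Om\|f\|_{L_1(\A)}$.

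The step needing care is the pointwise reduction from $1_\M-\xi_Q$, for all $Q$ containing-type relations, to the projections $p_R$ on enlarged ancestor cubes. Concretely, the danger is that a single $x$ lies in $(2^{101}+1)Q$ for cubes $Q$ of all generations, whose ancestors $R$ are not unique near $x$. I would handle this by checking the inequality fiberwise. For fixed $x$, each $1_\M-\xi_Q$ with $x\in(2^{101}+1)Q$ is the orthogonal sum of the $p_R$ over ancestors $R$ of $Q$, and each such $R$ satisfies $x\in(2^{101}+2)R$. Hence $1_\M-\xi_Q$ is dominated by the supremum of $p_R$ over the $R$ with $x\in(2^{101}+2)R$, and the right-hand side above is exactly this supremum.
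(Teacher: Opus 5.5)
Your proof is correct and is essentially Parcet's proof of \cite[Lemma 4.2]{Par09}, which the paper invokes. Parcet sets $\zet=1_\A-\bigvee_{Q\in\Q}(\xi_{\widehat Q}-\xi_Q)\chi_{(2^{101}+1)Q}$, so (i) follows at once from subadditivity of $\vphi$ and Lemma \ref{l:28cucu}(iii). The bound $\zet(x)\le\xi_{Q_0}$ then uses exactly your two ingredients: the nesting of dilated ancestors and the telescoping identity $1_\M-\xi_{Q_0}=\sum_{R\supseteq Q_0}p_R$. You instead put $1_\M-\xi_Q$ into the definition, so (ii) is immediate and those ingredients are spent on (i). In fact $cQ\subset cR$ holds exactly whenever $Q\subset R$ and $c\ge1$, so the enlargement to $(2^{101}+2)R$ is unnecessary, and your projection coincides with Parcet's.
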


The proof of this lemma is a straightforward modification of \cite[Lemma 4.2]{Par09}. The specific constant $2^{101}+1$ is not crucial. We now turn our attention to the good functions.
\vskip0.24cm

\subsection{Estimates for the good functions}\label{s:1943}\quad
\vskip0.24cm

We first decompose $g$  into the diagonal terms and the off-diagonal terms
\Bes
g_d=qfq+\sum_{k\in\Z}p_kf_kp_k,\quad g_\off=\sum_{i\neq j}p_if_{i\vee j}p_j+qf(1_\A-q)+(1_\A-q)fq.
\Ees
The proofs for the diagonal term  $g_d$ and the off-diagonal term  $g_\off$ will be different, as we shall see below.

In the following we recall some properties of the diagonal term $g_d$ and the off-diagonal term $g_\off$ and related estimates which were proved  in \cite{Par09}.
\begin{lemma}\label{l:19gbasic}
For the diagonal term  $g_d$, we have the following basic property:
\Be\label{e:19gdes}
\|g_d\|_{L_1(\A)}\lc\|f\|_{L_1(\A)},\quad \|g_d\|_{L_\infty(\A)}\lc\lam\C_\Om^{-1}.
\Ee

Let $df_s$ be the martingale difference. For the off-diagonal term $g_{\off}$, we rewrite it as follows
\Bes
g_{\off}=\sum_{s\geq1}\sum_{k\in\Z}p_kdf_{k+s}q_{k+s-1}+q_{k+s-1}df_{k+s}p_k=:\sum_{s\geq1}\sum_{k\in\Z}g_{k,s}=:\sum_{s\geq1} g_{(s)}.
\Ees
The martingale difference sequence of $g_{(s)}$ satisfies $(dg_{(s)})_{k+s}=g_{k,s}$.
Meanwhile, we have the  estimate
\Be\label{e:19gbasic}
\sup_{s\geq1}\|g_{(s)}\|_{L_2(\A)}^2=\sup_{s\geq1}\sum_{k\in\Z}\|g_{k,s}\|_{L_2(\A)}^2\lc\lam\C_\Om^{-1}\|f\|_{L_1(\A)}.
\Ee
\end{lemma}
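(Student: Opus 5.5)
The plan is to follow Parcet's argument \cite{Par09}, which uses only properties (i)--(iv) of Lemma \ref{l:28cucu} and the $\xi_Q$-description of the $q_k$.

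\emph{Diagonal term.} Write $g_d=qfq+\sum_k p_kf_kp_k$. The $L_1$ bound in \eqref{e:19gdes} comes from the trace property and the mutual orthogonality of the projections: $\vphi(p_kf_kp_k)=\vphi(p_kf_k)=\vphi(p_kf)$ because $p_k\in\A_k$ and $\mathsf E_k$ is trace preserving. Summing over $k$ and adding $\vphi(qfq)=\vphi(qf)$ gives $\vphi(f(\sum_kp_k+q))=\vphi(f)=\|f\|_{L_1(\A)}$, since $f\ge0$.

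For the $L_\infty$ bound, note that the $p_k$ and $q$ are pairwise orthogonal, so the norm of $g_d$ is the maximum of the norms of its blocks.
\begin{itemize}
\item For the blocks $p_kf_kp_k$, use $p_k\le q_{k-1}$ and the fact that $p_k$ commutes with $q_{k-1}f_kq_{k-1}$. This gives $p_kf_kp_k=p_k(q_{k-1}f_kq_{k-1})p_k$. Locally on $Q\in\Q_k$ this equals $p_Q\,\xi_{\widehat Q}f_Q\xi_{\widehat Q}\,p_Q$.
\item Here is the one genuinely delicate point, and I would handle it as in \cite{Par09}. The cube average is controlled by the parent: $f_Q\le 2^df_{\widehat Q}$ for positive $f$. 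Hence $\xi_{\widehat Q}f_Q\xi_{\widehat Q}\le 2^d\xi_{\widehat Q}f_{\widehat Q}\xi_{\widehat Q}\le 2^d\lam\C_\Om^{-1}$, using property (iii) of the $\xi$'s at the parent level (or $\xi_{\widehat Q}=1_\M$ when $k-1<m$, where one uses instead $f_Q\le 2^d f_{\widehat Q}$ together with the definition at level $m$; the first-generation case $k=m$ is handled by the same comparison).
\item For $qfq$, use $q\le q_k$, so $qf_kq\le\lam\C_\Om^{-1}q$ for all $k$. Then let $k\to\infty$: $f_k\to f$ in the appropriate weak sense, which gives $qfq\le\lam\C_\Om^{-1}$.
\end{itemize}

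\emph{Off-diagonal term.} The rewriting of $g_{\off}$ is an index regrouping. For $i<j$ one has $f_{i\vee j}=f_j$, and $\sum_{j>i}p_j+q=q_i$. Then a telescoping in $f_j=f_i+\sum_{s\ge1}^{j-i}df_{i+s}$ together with $p_iq_i=0$ kills the $f_i$ terms. Regrouping what remains gives $p_k\,df_{k+s}\,q_{k+s-1}$, and the case $i>j$ is symmetric. Since $p_k,q_{k+s-1}\in\A_{k+s-1}$, the term $g_{k,s}$ is an $\A_{k+s-1}$-bimodule multiple of a martingale difference. It is therefore itself a martingale difference at level $k+s$, which proves $(dg_{(s)})_{k+s}=g_{k,s}$, and orthogonality of martingale differences gives the equality in \eqref{e:19gbasic}.

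\emph{Main estimate.} The main obstacle is the bound $\sum_k\|p_kdf_{k+s}q_{k+s-1}\|_2^2\lc\lam\C_\Om^{-1}\|f\|_1$. I plan to write $df_{k+s}=f_{k+s}-f_{k+s-1}$ and treat the two pieces separately.
\begin{itemize}
\item For the $f_{k+s}$ piece, use $q_{k+s-1}f_{k+s}q_{k+s-1}\le 2^d\lam\C_\Om^{-1}$, which follows from the parent-comparison trick above. Then
\[
\|p_kf_{k+s}q_{k+s-1}\|_2^2=\vphi\big(p_kf_{k+s}^{1/2}\,(f_{k+s}^{1/2}q_{k+s-1}f_{k+s}^{1/2})\,f_{k+s}^{1/2}p_k\big).
\]
I would bound this instead in the order $\vphi(q_{k+s-1}f_{k+s}p_kf_{k+s}q_{k+s-1})$ using $p_k\le 1$. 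By Cauchy--Schwarz for positive operators this is at most $\|q_{k+s-1}f_{k+s}q_{k+s-1}\|_\infty\,\vphi(p_kf_{k+s})$, and $\vphi(p_kf_{k+s})=\vphi(p_kf)$.
\item The $f_{k+s-1}$ piece is treated the same way, with the estimate on $q_{k+s-1}f_{k+s-1}q_{k+s-1}$ coming directly from (ii).
\end{itemize}
Summing over $k$ then gives the bound $\lc\lam\C_\Om^{-1}\sum_k\vphi(p_kf)\le\lam\C_\Om^{-1}\|f\|_1$, uniformly in $s$.
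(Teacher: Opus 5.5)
Your proposal is correct and is essentially Parcet's argument from \cite{Par09}, which the paper invokes for this lemma without reproducing it. The ingredients match: trace invariance and the parent comparison $f_Q\le 2^d f_{\widehat Q}$ for $g_d$, telescoping to rewrite $g_{\off}$, and $\|p_kf_{k+s}q_{k+s-1}\|_2^2\le\|q_{k+s-1}f_{k+s}q_{k+s-1}\|_\infty\,\vphi(p_kf)$ (and likewise with $f_{k+s-1}$) for \eqref{e:19gbasic}.

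Three small points need fixing.
\begin{enumerate}
\item The vanishing $p_if_iq_i=p_i(q_{i-1}f_iq_{i-1})q_i=p_iq_i(q_{i-1}f_iq_{i-1})=0$ uses the commutation property (i) of Lemma \ref{l:28cucu}. The relation $p_iq_i=0$ alone does not give it.
\item The case $k=m$ rests on $f_{m-1}\le\lam\C_\Om^{-1}$, which is property (ii) with $q_{m-1}=1_\A$. It does not come from the level-$m$ definition, which gives no upper bound on $p_mf_mp_m$.
\item Drop the phrase ``using $p_k\le 1$''. The bound follows directly from $f_{k+s}^{1/2}q_{k+s-1}f_{k+s}^{1/2}\le\|q_{k+s-1}f_{k+s}q_{k+s-1}\|_\infty$ in your first display.
\end{enumerate}
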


We first consider the diagonal terms, which are simpler since they behave similarly to those in the classical Calder\'on--Zygmund decomposition.
Following the classical strategy, we need to establish the $L_p$ boundedness of $T$ for some $p\in(1,\infty)$. In this situation, the condition on the kernel $\Om$ can actually be relaxed to $\Om\in L \log^+L (\S^{d-1})$.
\begin{lemma}\label{l:19tgl2}
Suppose that $\Om$ is homogeneous of degree zero, $\Om\in L \log^+ L (\S^{d-1})$ and satisfies the cancellation property $\int_{\S^{d-1}}\Om(\tet)d\si(\tet)=0$. Then for all $1<p<\infty$, we have
\Bes
\|Tf\|_{L_p(\A )}\lc\|\Om\|_{L\log^+L}\|f\|_{L_p(\A)}.
\Ees
\end{lemma}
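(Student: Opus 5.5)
We would reduce Lemma \ref{l:19tgl2} to the classical Calder\'on--Zygmund method of rotations, transferred to the operator-valued setting. The key observation is that for $1<p<\infty$ the space $L_p(\M)$ is a UMD Banach space, and $L_p(\A)$ is the Bochner space $L_p(\R^d;L_p(\M))$ by \eqref{e:19deflp}. So the Hilbert transform and the directional maximal function machinery are available with values in $L_p(\M)$. We split $\Om=\Om_e+\Om_o$ into even and odd parts; both inherit homogeneity and cancellation, and $\|\Om_e\|_{L\log^+L}+\|\Om_o\|_{L\log^+L}\lc\|\Om\|_{L\log^+L}$.

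\emph{Odd part.} Polar coordinates give the pointwise identity
\[
T_{\Om_o}f(x)=\frac{\pi}{2}\int_{\S^{d-1}}\Om_o(\tet)\,H_\tet f(x)\,d\si(\tet),
\]
where $H_\tet f(x)=\frac1\pi\,\mathrm{p.v.}\int_\R f(x-t\tet)\frac{dt}{t}$ is the directional Hilbert transform. By the UMD property, the one-dimensional Hilbert transform is bounded on $L_p(\R;L_p(\M))$. Fubini along lines parallel to $\tet$ then gives $\|H_\tet f\|_{L_p(\A)}\lc_p\|f\|_{L_p(\A)}$ uniformly in $\tet$. Minkowski's inequality yields $\|T_{\Om_o}f\|_{L_p(\A)}\lc\|\Om_o\|_{1}\|f\|_{L_p(\A)}$.

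\emph{Even part.} Here we use the Calder\'on--Zygmund trick via Riesz transforms. Write $T_{\Om_e}=-\sum_{j=1}^d R_jR_jT_{\Om_e}=-\sum_j R_j(R_jT_{\Om_e})$. Each $R_j$ is a smooth Calder\'on--Zygmund operator, hence bounded on $L_p(\A)$; this follows, for instance, from $R_j$ being an average of directional Hilbert transforms with the odd smooth kernel $\tet_j$, as in the odd case. The composite $R_jT_{\Om_e}$ has Fourier multiplier $\frac{\xi_j}{|\xi|}\hat K_e(\xi)$. Its kernel is $K_j(x)=\Om_j(x)/|x|^d$ with $\Om_j$ odd, homogeneous of degree zero, and, by the classical estimate, $\|\Om_j\|_{L^1(\S^{d-1})}\lc\|\Om_e\|_{L\log^+L}$.

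This classical estimate is the step where $L\log^+L$ enters, and it is where we expect the main difficulty: the estimate concerns only the scalar kernel and not $\M$, so it can be quoted from Calder\'on--Zygmund. To be safe we would verify it by decomposing $\Om_e$ into level pieces $\Om_e\chi_{\{2^k\le|\Om_e|<2^{k+1}\}}$ (with mean corrections) and using that $\tet_j$ composed with an even kernel with $L^r$ density gives an $L^1$ odd kernel with norm $\lc (r-1)^{-1}\|\cdot\|_r$. Summing over $k$ produces the $\log$ factor. Applying the odd case to $R_jT_{\Om_e}$ and the boundedness of $R_j$ completes the proof.

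Since all identities are linear and hold first for $f$ in the dense class of $\M$-valued Schwartz functions, density extends the bound to all of $L_p(\A)$.
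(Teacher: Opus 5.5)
Your route is the paper's route. The paper proves this lemma only by citing Burkholder's UMD-valued method of rotations (odd $\Om\in L^1$, even $\Om\in L\log^+L$), together with $L_p(\A)=L_p(\R^d;L_p(\M))$ and the UMD property of $L_p(\M)$. Your odd/even argument with the Riesz-transform reduction is that method written out.

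There is, however, one step that cannot go through as written, and the defect is already in the statement. The paper defines $\|\Om\|_{L\log^+L}=\int_{\S^{d-1}}|\Om|(1+\log^+|\Om|)\,d\si$, which is not homogeneous, and for this quantity your claim $\|\Om_j\|_{L^1(\S^{d-1})}\lc\|\Om_e\|_{L\log^+L}$ is false. In your level-set sum, a piece of height $2^k$ on a set $E_k$ costs about $2^k\si(E_k)\log(e/\si(E_k))$. For $k\le0$ with $\si(E_k)$ tiny, the factor $\log(e/\si(E_k))$ is not controlled by $1+\log^+|\Om|$.

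Concretely, let $\Om=\chi_A-\chi_B$, where $A=\{\tet\in\S^{d-1}:|\tet_1|<\del\}$ and $B$ is a pair of antipodal caps around $\pm e_1$ with $\si(B)=\si(A)$.
\begin{itemize}
\item $\Om$ is even, has mean zero and $|\Om|\le1$, so $\|\Om\|_{L\log^+L}=2\si(A)\approx\del$.
\item For even $\Om$ one has $\hat K(\xi)=\int_{\S^{d-1}}\Om(\tet)\log\big(1/|\inn{\tet}{\xi'}|\big)\,d\si(\tet)$ with $\xi'=\xi/|\xi|$, which gives $|\hat K(e_1)|\gtrsim\del\log(1/\del)$.
\item Since $\|T\|_{L_2(\A)\to L_2(\A)}=\|\hat K\|_\infty$, the stated inequality fails as $\del\to0$, whatever the proof.
\end{itemize}

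The repair is to use $T_{c\Om}=cT_\Om$ and normalize $\|\Om\|_1=1$ before decomposing. Treat $\{|\Om_e|\le1\}$ (mean-corrected) as a single $L^2$ piece, which costs $O(1)$. Levels with $\si(E_k)<2^{-2k}$ contribute a summable amount. The remaining levels cost $\lc\int_{E_k}|\Om_e|(1+\log^+|\Om_e|)$, and convexity gives $\int|\Om_e|(1+\log^+|\Om_e|)\le\int|\Om|(1+\log^+|\Om|)$. This yields
\[
\|Tf\|_{L_p(\A)}\lc\Big(\int_{\S^{d-1}}|\Om(\tet)|\big(1+\log^+(|\Om(\tet)|/\|\Om\|_1)\big)\,d\si(\tet)\Big)\|f\|_{L_p(\A)}.
\]
This scale-invariant constant is $\lc\C_\Om$ by \eqref{e:19constantom}. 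That is the only form in which the lemma is used (the estimate for $g_d$), so with this normalization your argument supplies everything the paper needs.
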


Lemma \ref{l:19tgl2} holds for more general Bochner $L_p$ spaces with values in a UMD space. In fact, if $X$ is a UMD space, Burkholder \cite{Bur83} used the method of rotation to show that $T$ is bounded on $L^p(\R^d,X)$ ($1<p<\infty$) when $\Om$ is odd with $\Om\in L^1(\S^{d-1})$ or $\Om$ is even with $\Om\in L\log^+L(\S^{d-1})$, where $L^p(\R^d,X)$ is the Bochner $L_p$ space with values in $X$. As a consequence, since $L_p(\mathcal A)=L_p(\R^d,L_p(\M))$ and $L_p(\M)$ is a UMD space when $1<p<\infty$ (see \cite{PX03}), $T$ extends to a bounded operator on $L_p(\mathcal A)$.

Based on the above lemma, we can prove the estimate for the diagonal term $g_d$ as follows:
\Bes
\begin{split}
\vphi (|Tg_d|>\lam/4)&\lc\lam^{-2}\|Tg_d\|^2_{L_2(\A )}
\lc\lam^{-2}\C_\Om^2\|g_d\|^2_{L_2(\A)}\\
&\lc\lam^{-2}\C_\Om^2\|g_d\|_{L_1(\A)}\|g_d\|_{L_\infty(\A)}\lc\lam^{-1}\C_\Om\|f\|_{L_1(\A)},
\end{split}
\Ees
where in the first inequality we use  the Chebyshev  inequality, the second inequality follows from Lemma \ref{l:19tgl2} for $p=2$, and the third and fourth inequalities follow from \eqref{e:19gdes}.

In the remaining part of this subsection, we focus on the estimate for $g_\off$. We first use Lemma \ref{l:19excep} to reduce the proof to the case $\zet T g_\off \zet$, which will also be argued for the bad functions later in Section \ref{s:1942}.
Indeed,
\Bes
Tg_\off=(1_\A-\zeta)Tg_\off(1_\A-\zet)+\zet Tg_\off(1_\A-\zet)+(1_\A-\zet)Tg_\off\zet+\zet Tg_\off\zet.
\Ees
By property (i) in Lemma \ref{l:19excep}, we obtain
\Bes
\begin{split}
\vphi (|Tg_\off|>\lam/4)&\lc\vphi(1_\A-\zet)+\vphi (|\zet Tg_\off\zet|>\lam/8)\\
&\lc\lam^{-1}\C_\Om\|f\|_{L_1(\A)}+\vphi (|\zet Tg_\off\zet|>\lam/8).
\end{split}
\Ees
Thus our goal is to prove
\Be\label{e:19goffg}
\vphi(|\zet T g_\off \zet|>\lam/8)\lc\lam^{-1}\C_\Om\|f\|_{L_1(\A)}.
\Ee

In the following, we make a dyadic decomposition of the kernel $K$. Let $\phi$ be a smooth function on $\mathbb{R}^d$ which is supported in the annulus $\{2^{-1} < |x| < 2 \}$ and satisfies the partition of unity condition
\begin{equation*}
\sum_{j\in \mathbb{Z}}\phi_j(x) = 1 \quad \text{for all } x \in \mathbb{R}^d \setminus \{0\},
\end{equation*}
where $\phi_j(x) = \phi(2^{-j}x)$. We define the associated dyadic operator
\begin{align*}
T_j f(x) = \int_{\mathbb{R}^d}{K}_j(x-y)f(y) dy
\end{align*}
with the kernel ${K}_j(x) = \phi_j(x)\frac{\Omega(x)}{|x|^d}$. Then we have $T=\sum_{j\in\Z}T_j$.

By the above dyadic decomposition, the expression of $g_\off$ in Lemma \ref{l:19gbasic} and the formula $f=\sum_{n\in\Z}df_n$, we can write
\Bes
\begin{split}
\zet T g_\off\zet&=\zet\sum_{s\geq1}\sum_{j\in\Z} T_jg_{(s)}\zet=\zet\sum_{s\geq1}\sum_{j\in\Z}\sum_{n\in\Z} T_jd\big(g_{(s)}\big)_{n-j+s}\zet\\
&=\zet\sum_{s\geq1}\sum_{j\in\Z}\sum_{n\in\Z}   T_jg_{n-j,s}\zet
=\zet\sum_{s\geq1}\sum_{n\geq100}\sum_{j\in\Z}   T_jg_{n-j,s}\zet
\end{split}
\Ees
where in the last equality we apply the following observation: if $\zet(x)\neq0$ and $n<100$,
we have for all $s\geq1$,
\Be\label{e:28exec}
\begin{split}
\zet(x)T_jg_{n-j,s}(x)\zet(x)
&=\sum_{Q\in\Q_{n-j}}\zet(x)\chi_{((2^{101}+1)Q)^c}(x)\int_Q K_j(x-y)\\
&\quad\times\big[ p_{Q}(df_{n-j}) p_{n-j+s-1}
+ p_{n-j+s-1}(df_{n-j}) p_{Q}\big]dy\,\zet(x)\\
&=0
\end{split}
\Ee
here in the first equality we use $\zet(x) p_{Q}=0$ if $x\in (2^{101}+1)Q $ by property (ii) of $\zet$ in Lemma \ref{l:19excep},
and the second equality follows from the fact that $x\in ((2^{101}+1)Q )^c$ and $y\in Q$ implies $|x-y|\geq2^{100+j-n}$,
which contradicts the support of $K_j$ when $n<100$.

By  the Chebyshev  inequality, the triangle inequality and   $\zet$ is a projection in $\A$, we then obtain
\Bes
\begin{split}
\vphi (|\zet T g_\off \zet|>\lam)&\lc\lam^{-2}\|\zet T g_\off \zet\|_{L_2(\A )}^2\\
&\lc\lam^{-2}\bigg(\sum_{s\geq1}\sum_{n\geq100}\big\|\sum_j T_j g_{n-j,s}\big\|_{L_2(\A)}\bigg)^2.
\end{split}
\Ees
Hence, to finish the proof of the off-diagonal term $g_\off$, it is sufficient to show the following proposition.

\begin{proposition}\label{l:28boff}
For any $s\geq 1$ and $n\geq100$, there exists a positive constant $D(s,n)$ depending on $s$ and $n$ such that
\Be\label{e:19boffgoal}
\big\|\sum_j  T_jg_{n-j,s}\big\|^2_{L_2(\A)}
\lc D(s,n)\lam\|f\|_{L_1(\A)},
\Ee
and
\Be\label{e:19boffdecay}
\Big(\sum_{s\geq1}\sum_{n\geq100}D(s,n)^{\frac12}\Big)^2\lc\C_\Om.
\Ee
\end{proposition}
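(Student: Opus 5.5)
We will prove \eqref{e:19boffgoal} by expanding the square and exploiting orthogonality of the kernels $K_j$ through their Fourier transforms. This is the pseudo-localization philosophy, with the Littlewood--Paley decomposition replacing smoothness of the kernel.

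\textbf{Step 1: expand the square.} Write
\[
\big\|\sum_j T_jg_{n-j,s}\big\|_{L_2(\A)}^2=\sum_{i,j}\vphi\big((T_ig_{n-i,s})^*T_jg_{n-j,s}\big).
\]
By Plancherel this equals $(2\pi)^{-d}\sum_{i,j}\int\tau\big(\overline{\widehat{K_i}(\xi)}\widehat{K_j}(\xi)\,\widehat{g_{n-i,s}}(\xi)^*\widehat{g_{n-j,s}}(\xi)\big)d\xi$. We have two sources of decay.

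\emph{Kernel decay.} Because $\int_{\S^{d-1}}\Om\,d\si=0$, for $\Om$ in $L^\infty$ (or after truncation) one has $|\widehat{K_j}(\xi)|\lc\|\Om\|\min\{|2^j\xi|,|2^j\xi|^{-\alpha}\}$. For $\Om$ only in $L\log L$, this bound holds up to logarithms after decomposing $\Om=\sum_l\Om_l$ with $\Om_l=\Om\chi_{\{2^{l}\le|\Om|/\|\Om\|_1<2^{l+1}\}}$, at the cost of powers of $l$. The factor $(\log)^{5/2}$ in $\C_\Om$ is designed to absorb these powers.

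\emph{Data localization.} The term $g_{n-j,s}$ is a martingale difference at level $n-j+s$. It has mean zero on cubes of $\Q_{n-j+s-1}$ and is constant on cubes of $\Q_{n-j+s}$. Hence its Fourier transform is concentrated near $|\xi|\approx 2^{n-j+s}$ in a Littlewood--Paley sense.

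\textbf{Step 2: frequency decomposition.} Insert a smooth Littlewood--Paley partition $\sum_k\Delta_k=I$ and split each $g_{n-j,s}=\sum_k\Delta_k g_{n-j,s}$. At frequency $2^{k}$:
\begin{itemize}
\item for $k\ll n-j+s$, the mean-zero property on cubes of side $2^{-(n-j+s-1)}$ gives a gain $2^{-(n-j+s-k)}$;
\item for $k\gg n-j+s$, the factor $\widehat{K_j}(\xi)$ at $|\xi|\approx2^k\gg2^{-j}$ gives decay $2^{-\alpha(k+j)}$. This must be combined with the fact that piecewise-constant functions have $\|\Delta_k h\|_2^2\lc 2^{-(k-m)}\|h\|_2^2$, where $m$ is the scale of constancy.
\end{itemize}
After Cauchy--Schwarz in $(i,j)$ with the almost orthogonality $|\widehat{K_i}\widehat{K_j}|$, the main contribution is $\sum_j\|g_{n-j,s}\|_2^2$ times a factor decaying in $n$. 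Here $n\ge100$ means the kernel scale $2^j$ is much larger than the cube scale $2^{j-n}$, which places us in the regime $|2^j\xi|\gg1$. The bound \eqref{e:19gbasic} then gives $\lc\lam\C_\Om^{-1}\|f\|_1$.

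\textbf{Step 3: what remains, and the main obstacle.} The data decay in $n$ and $s$ is only of the form $2^{-c(n+s)}$ times $\|\Om\|^2$, but $\Om$ is not bounded. So we split $\Om=\Om\chi_{|\Om|\le2^{\eps n}\|\Om\|_1}+\text{rest}$. The bounded part gives
\[
D(s,n)\lc \C_\Om^{-1}\,\|\Om\|_1^2\,2^{2\eps n}2^{-c(n+s)},
\]
which is summable after taking square roots.

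The unbounded remainder cannot be treated in $L_2$ alone. For it I would use the zeta-localization: only $x$ near the support matters, and the remainder's $L_1$ mass is $\int_{|\Om|>2^{\eps n}\|\Om\|_1}|\Om|$. This mass, summed over $n$ with the resulting polynomial weights, is controlled by $\int|\Om|(\log^+(|\Om|/\|\Om\|_1))^{5/2}$, which is part of $\C_\Om$.

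The main obstacle is the $s$-sum. Since $g_{k,s}$ involves the two scales $p_k$ and $q_{k+s-1}$, the orthogonality must be run carefully in $s$ so as to gain $2^{-cs}$, or at worst $s^{-3/2-\delta}$ paired with logarithmic powers of $\Om$. This is what is needed so that the sum in \eqref{e:19boffdecay}, $\big(\sum_{s,n}D(s,n)^{1/2}\big)^2$, converges to a quantity $\lc\C_\Om$.
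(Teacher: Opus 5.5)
Your Steps 1--2 broadly match the paper's setup: Littlewood--Paley pieces, the mean-zero gain below frequency $2^{n-j+s}$, Fourier decay of $K_j$ above it, Cauchy--Schwarz in $j$, and closing with $\sum_j\|g_{n-j,s}\|_{L_2(\mathcal A)}^2\lesssim\lambda\mathcal{C}_\Omega^{-1}\|f\|_{L_1(\mathcal A)}$. The gap is Step 3, which is where the content of the proposition lies: producing $D(s,n)$ for unbounded $\Omega$ with $\sum_{s,n}D(s,n)^{1/2}\lesssim\mathcal{C}_\Omega^{1/2}$. You leave the large values of $\Omega$ untreated and propose $\zeta$-localization plus $L_1$ bounds.

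That cannot prove the statement, which is a pure $L_2$ inequality with no $\zeta$ in it. The projection $\zeta$ also gives nothing beyond discarding $n<100$: for $n\ge100$ the support of $K_j$ already lies at distance about $2^j\gg2^{j-n}$ from the relevant cubes. Even as a route to the downstream weak-type bound, an $L_1$ treatment would need a bound on $\sum_k\|g_{k,s}\|_{L_1(\mathcal A)}$, which you do not supply. It would also need a threshold that grows with $s$. With your threshold $2^{\varepsilon n}$ the remainder gets no decay in $s$, so its contributions cannot be summed over $s\ge1$; this is why the $s$-sum looks to you like the main obstacle. Finally, a global truncation of $\Omega$ destroys the cancellation $\int_{\mathbb S^{d-1}}\Omega\,d\sigma=0$, which is needed in the low-frequency range.

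The missing idea is that the large values of $\Omega$ can be handled in $L_2$. You truncate only in the high-frequency range, and at a height tied to the relative frequency $2^j|\xi|$ rather than to $n$. With $\Lambda_k$ localized to $|\xi|\approx2^{-k}$, the paper splits into $j-k<(n+s)\epsilon$ and $j-k\ge(n+s)\epsilon$.

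\emph{Low range.} No truncation is made. The full kernel satisfies $\sum_j|\widehat{K_j}(\xi)|^2\lesssim\mathcal{C}_\Omega^2$, which uses cancellation. The mean-zero estimate $\|\Lambda_kg_{n-j,s}\|_{L_2(\mathcal A)}^2\lesssim2^{j-k-n-s}\|g_{n-j,s}\|_{L_2(\mathcal A)}^2$ then gives the factor $2^{-(1-\epsilon)(n+s)}$.

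\emph{High range.} For each $l=j-k$, cut $\Omega$ at height $2^{l\nu}\|\Omega\|_1$. The bounded part obeys $|\widehat{K_j}(\xi)|\lesssim2^{-l(\delta-\nu)}\|\Omega\|_1$. The large part needs only the trivial bound $|\widehat{K_j}(\xi)|\le\int_{\{|\Omega|>2^{l\nu}\|\Omega\|_1\}}|\Omega|\,d\sigma$, and its square is summable over $l\ge(n+s)\epsilon$.

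This yields $D(s,n)=2^{-c(n+s)}\mathcal{C}_\Omega+B(s,n)$ with $B(s,n)=\mathcal{C}_\Omega^{-1}\sum_{l\ge(n+s)\epsilon}\big(\int_{\{|\Omega|>2^{l\nu}\|\Omega\|_1\}}|\Omega|\,d\sigma\big)^2$. Counting $(n,s,l)$ against $\log^+(|\Omega|/\|\Omega\|_1)$ produces exactly the exponent $5/2$. The two-scale structure $p_k$, $q_{k+s-1}$ of $g_{k,s}$ plays no role. Only two facts are used: $g_{n-j,s}$ is the martingale difference at level $n-j+s$, and $\sum_k\|g_{k,s}\|_{L_2(\mathcal A)}^2\lesssim\lambda\mathcal{C}_\Omega^{-1}\|f\|_{L_1(\mathcal A)}$ uniformly in $s$. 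All the decay comes in the combination $n+s$.
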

The proof of Proposition \ref{l:28boff} will be given in Section \ref{s:196}.
\vskip0.24cm

\subsection{Estimates for the bad functions}\label{s:1942}\quad
\vskip0.24cm

We split $Tb$ into four components:
\Bes
(1_\A-\zeta)Tb(1_\A-\zet)+\zet Tb(1_\A-\zet)+(1_\A-\zet)Tb\zet+\zet Tb\zet.
\Ees

By Lemma \ref{l:19excep} and the same reduction argument used for the good functions, it suffices to show that the desired estimate holds for $\vphi (|\zet Tb\zet|>\lam/4)$. Recall the expression of the bad function:
\Bes
b=\sum_{k\in\Z}p_k(f-f_k)p_k+\sum_{s\geq1}\sum_{k\in\Z}p_k(f-f_{k+s})p_{k+s}+p_{k+s}(f-f_{k+s})p_k
=:\sum_{s=0}\sum_{k\in\Z}b_{k,s},
\Ees
where
\Be\label{e:19db}
b_{k,0}=p_k(f-f_k)p_k,\quad b_{k,s}=p_k(f-f_{k+s})p_{k+s}+p_{k+s}(f-f_{k+s})p_k.
\Ee

We now list some basic properties of the bad functions that will be needed in the proof.
\begin{lemma}\label{l:19bbasic}
Let $b_{k,s}$ be  defined in \eqref{e:19db}. For any fixed $s\geq0$, the following hold:
\begin{enumerate}[\rm (i).]
\item An $L_1$ bound estimate:
$
\sum_{k\in\Z}\|b_{k,s}\|_{L_1(\A)}\lc\|f\|_{L_1(\A)};
$
\item The cancellation condition: for all $k\in\Z$ and every $Q\in\Q_{k+s}$,
$\int_{Q}b_{k,s}(y)dy=0$.
\end{enumerate}
\end{lemma}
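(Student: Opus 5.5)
For part (i) I will only use trace properties and the Hilbert--Schmidt H\"older inequality. Part (ii) is a direct computation, because the projections $p_k$ are measurable with respect to the relevant dyadic $\sigma$-algebra.

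\textbf{Key structural facts.} Since $q_{k-1}\in\A_{k-1}\subset\A_k$ and $q_k\in\A_k$, we have $p_k\in\A_k\subset\A_{k+s}$ for every $s\geq0$. The conditional expectation $\mathsf E_{k+s}$ is trace preserving and $\A_{k+s}$-bimodular. Hence for any projection $e\in\A_{k+s}$,
\Bes
\vphi(ef_{k+s}e)=\vphi(\mathsf E_{k+s}(efe))=\vphi(efe).
\Ees
Moreover the $p_k$ are pairwise orthogonal with $\sum_k p_k\leq 1_\A$, and $f\geq0$. Therefore
\Bes
\sum_k\vphi(p_kfp_k)=\vphi\Big(f^{\frac12}\Big(\sum_kp_k\Big)f^{\frac12}\Big)\leq\|f\|_{L_1(\A)}.
\Ees

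\textbf{Proof of (i).} For $s=0$, the triangle inequality gives $\|b_{k,0}\|_1\leq\|p_kfp_k\|_1+\|p_kf_kp_k\|_1$. Both operators are positive, so their $L_1$ norms are traces. By the identity above both equal $\vphi(p_kfp_k)$, and summing over $k$ gives $\sum_k\|b_{k,0}\|_1\leq2\|f\|_{L_1(\A)}$.

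For $s\geq1$ we treat the four pieces $p_kfp_{k+s}$, $p_kf_{k+s}p_{k+s}$ and their adjoints. For a positive $h$, H\"older's inequality gives
\Bes
\|p_khp_{k+s}\|_1\leq\|p_kh^{\frac12}\|_2\|h^{\frac12}p_{k+s}\|_2=\vphi(p_khp_k)^{\frac12}\vphi(p_{k+s}hp_{k+s})^{\frac12}.
\Ees
We apply this with $h=f$ and with $h=f_{k+s}$. In the second case, since $p_k,p_{k+s}\in\A_{k+s}$, both traces can be replaced by $\vphi(p_kfp_k)$ and $\vphi(p_{k+s}fp_{k+s})$. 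Cauchy--Schwarz in $k$ then yields
\Bes
\sum_k\|b_{k,s}\|_1\leq4\Big(\sum_k\vphi(p_kfp_k)\Big)^{\frac12}\Big(\sum_k\vphi(p_{k+s}fp_{k+s})\Big)^{\frac12}\leq4\|f\|_{L_1(\A)}.
\Ees
This bound is uniform in $s$.

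\textbf{Proof of (ii).} Fix $Q\in\Q_{k+s}$. Since $p_k,p_{k+s}\in\A_{k+s}$, on $Q$ they are constant projections; call them $\alpha_Q,\beta_Q\in\M$. Likewise $f_{k+s}=f_Q$ on $Q$. Then
\Bes
\int_Qp_k(f-f_{k+s})p_{k+s}\,dy=\alpha_Q\Big(\int_Qf\,dy-|Q|f_Q\Big)\beta_Q=0.
\Ees
The same holds for the adjoint term, and also for $s=0$ with $Q\in\Q_k$.

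No step is a genuine obstacle. The only care needed is the use of H\"older's inequality for the off-diagonal products, since $p_kfp_{k+s}$ is not positive and its trace does not control its $L_1$ norm.
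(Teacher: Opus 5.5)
Your proof is correct and follows the standard argument the paper relies on. The paper does not prove this lemma itself but cites \cite{Par09} and \cite{Cad18}; the same H\"older/Cauchy--Schwarz bound $\sum_k\|p_k h\, p_{k+s}\|_{L_1(\A)}\le\|f\|_{L_1(\A)}$, for $h=f$ and for $h=f_{k+s}$ via trace invariance of $\mathsf E_{k+s}$, appears in the paper's own estimates \eqref{e:28pnjin} and \eqref{e:28pnjinaverage}, and the cancellation follows, as you say, from $p_k,p_{k+s}\in\A_{k+s}$.
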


The proof of Lemma \ref{l:19bbasic} can be found in \cite{Cad18} and \cite{Par09}.

From the definition of $T$, we further rewrite $Tb$ as follows: for any $x\in\R^d$,
\Bes
\begin{split}
\zet(x)Tb(x)\zet(x)&=\zet(x)\sum_{j\in\mathbb{Z}}T_j\biggl[\sum_{s\geq0}\sum_{n\in\mathbb{Z}}b_{n-j,s}\biggr](x)\zet(x)
  =\sum_{s\geq0}\sum_{n\in\mathbb{Z}}\sum_{j\in\mathbb{Z}}\zet(x)T_jb_{n-j,s}(x)\zet(x)\\
  &=\sum_{s\geq0}\sum_{n\geq100}\sum_{j\in\mathbb{Z}}\zet(x)T_jb_{n-j,s}(x)\zet(x),
\end{split}
\Ees
where the last equality follows because when $\zet(x)\neq0$ and $n<100$, we have
$$\zet(x)T_jb_{n-j,s}(x)\zet(x)=0,$$
by an argument analogous to that in \eqref{e:28exec}.

Consequently, to complete the treatment of the bad functions, it is enough to establish
\begin{equation}\label{e:19boff}
\vphi \big(\big|\zet\sum_{n\geq100}\sum_{s\geq0}\sum_{j\in
\mathbb{Z}}T_jb_{n-j,s}  \zet\big|>\lambda/4\big)\lc {\lambda}^{-1}\C_\Om\|f\|_{L_1(\A)}.
\end{equation}

Several important decompositions are crucial for the proof of \eqref{e:19boff}. They are stated as propositions below, and their proofs will be given in Section \ref{s:195}.

The first proposition indicates that \eqref{e:19boff} remains true when $\Om$ is restricted to a certain subset of $\S^{d-1}$. More precisely, for fixed $n\ge100$ and $s\geq 0$, we define $D^\iota=\{\theta\in\S^{d-1}:\,|\Om(\theta)|\geq2^{\iota (n+s)}\|\Om\|_{1}\}$, where $\iota\in (0,1)$ will be chosen later.
Let $T_{j,\iota}^{n,s}$ be given by
\Be\label{e:19tjiota}
T_{j,\iota}^{n,s}h(x)=\int_{\R^d}\Om\chi_{D^{\iota}}\Big(\frac{x-y}{|x-y|}\Big)
\cdot K_j(x-y)\cdot h(y)dy.
\Ee

\begin{proposition}\label{l:19cur}
Assume $\Om\in L(\log^+ L)^{\frac52}(\S^{d-1})$. With all the above notation, we have
\Bes
\vphi \big(\big|\zet\sum_{n\geq100}\sum_{s\geq0}\sum_{j\in
\mathbb{Z}}T_{j,\iota}^{n,s}b_{n-j,s}  \zet\big|>\lambda/8\big)\lc {\lambda}^{-1}\C_\Om\|f\|_{L_1(\A)}.
\Ees
\end{proposition}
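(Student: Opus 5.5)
This is the part of the kernel where $\Om$ is very large, so we can afford a crude $L_1$ estimate with no cancellation. The plan is to use the noncommutative Chebyshev inequality at the level of $L_1$. For a non-selfadjoint operator $A\in L_1(\A)$ we have $\vphi(|A|>\lam/8)\le 8\lam^{-1}\|A\|_{L_1(\A)}$. We will also use that $\zet$ is a projection, so $\|\zet A\zet\|_{L_1(\A)}\le\|A\|_{L_1(\A)}$. Thus it suffices to show
\Bes
\sum_{n\geq100}\sum_{s\geq0}\sum_{j\in\Z}\|T^{n,s}_{j,\iota}b_{n-j,s}\|_{L_1(\A)}\lc\C_\Om\|f\|_{L_1(\A)}.
\Ees
For each term we apply Minkowski's integral inequality in the Bochner space $L_1(\R^d;L_1(\M))$. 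Since the kernel is scalar valued, this gives
\Bes
\|T^{n,s}_{j,\iota}h\|_{L_1(\A)}\le\|\Om\chi_{D^\iota}(\cdot/|\cdot|)K_j\|_{L^1(\R^d)}\|h\|_{L_1(\A)}\lc\|\Om\chi_{D^\iota}\|_{L^1(\S^{d-1})}\|h\|_{L_1(\A)},
\Ees
because $\int\phi_j(x)|x|^{-d}\,dx$ is a constant independent of $j$ (use polar coordinates).

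Next we sum in $j$ for fixed $n,s$. The reindexing $k=n-j$ and Lemma \ref{l:19bbasic}(i) give $\sum_j\|b_{n-j,s}\|_{L_1(\A)}\lc\|f\|_{L_1(\A)}$, uniformly in $n$ and $s$. We are therefore left with the purely scalar quantity
\Bes
\sum_{n\ge100}\sum_{s\ge0}\int_{\S^{d-1}}|\Om(\tet)|\chi_{\{|\Om(\tet)|\ge2^{\iota(n+s)}\|\Om\|_1\}}\,d\si(\tet).
\Ees
We interchange the sum and the integral. For fixed $\tet$, the constraint is $\iota(n+s)\le\log_2(|\Om(\tet)|/\|\Om\|_1)$, so there are $N:=n+s\le\iota^{-1}\log_2(|\Om(\tet)|/\|\Om\|_1)$ with $N\ge100$. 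Each value of $N$ arises from at most $N+1$ pairs $(n,s)$. The number of admissible pairs is then $\lc\iota^{-2}[\log^+(|\Om(\tet)|/\|\Om\|_1)]^2$, and it vanishes unless $|\Om(\tet)|>\|\Om\|_1$. The sum is therefore bounded by $\iota^{-2}\int|\Om|(1+[\log^+(|\Om|/\|\Om\|_1)]^{2})\,d\si\lc_\iota\C_\Om$, using the second term in the definition \eqref{e:19constantom}. Since $\iota\in(0,1)$ is a fixed absolute constant, this is acceptable.

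There is one point to check. The input to Chebyshev must actually lie in $L_1(\A)$, and the series must converge there. The bound above yields absolute convergence in $L_1(\A)$, so this follows from the same estimate. No real obstacle is expected. The only step needing care is counting the pairs $(n,s)$: it costs a $\log^2$ factor, which is dominated by the $\log^{5/2}$ available in $\C_\Om$. If the counting were done more carelessly, for instance with an extra sum over $j$ not controlled by Lemma \ref{l:19bbasic}(i), the estimate could lose another power. So I will make sure the $j$-sum is absorbed by the $L_1$ bound for the bad functions before any counting is done.
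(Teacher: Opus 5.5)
Your proposal is correct and follows the paper's proof essentially step by step. The steps are: the $L_1$ Chebyshev inequality; H\"older to discard $\zet$; the scalar bound $\|K^{n,s}_{j,\iota}\|_{L^1(\R^d)}\lc\int_{D^\iota}|\Om|\,d\si$; Lemma \ref{l:19bbasic}(i) to absorb the $j$-sum; and finally a count of the admissible pairs $(n,s)$, costing $(1+\log^+(|\Om|/\|\Om\|_1))^2$, which is dominated by $\C_\Om$.
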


Thus, by Proposition \ref{l:19cur}, it remains to verify (\ref{e:19boff}) under the additional assumption that for each fixed $n\geq100$ and $s\geq0$, the kernel $\Omega$ satisfies $\|\Om\|_{\infty}\leq2^{\iota (n+s)}\|\Om\|_{1}$ inside every $T_j$.

Next we introduce a \emph{microlocal decomposition} of the kernel. To this end, we construct a partition of unity on $\S^{d-1}$.
Let $k\geq100$. Choose a family $\{e^k_v\}_{v\in\Theta_k}$ of unit vectors on $\S^{d-1}$
satisfying:

(a)\ $|e^k_v-e^k_{v'}|\geq 2^{-k\ga-4}$ whenever $v\neq v'$;

(b)\ For every $\theta\in \S^{d-1}$, there exists an $e^k_v$ such that $|e^k_v-\theta|\leq 2^{-k\ga-3}$.

\noindent The constant $0<\ga<1$ in (a) and (b) will be fixed later. Such a collection can be obtained by taking a maximal set satisfying (a); then (b) follows automatically from maximality. Note that the cardinality of $\{e^k_v\}_v$ is  $C2^{k\ga(d-1)}$. For each $\tet\in\S^{d-1}$, only finitely many $e^k_v$ satisfy $|e^k_v-\tet|\leq2^{-k\ga-4}$. We next build an associated partition of unity on $\S^{d-1}$. Let $\eta$ be a smooth, nonnegative, radial function with $\eta(u)=1$ for $|u|\leq \frac12$ and $\eta(u)=0$ for $|u|>1$. Define
$$\tilde{\Ga}^k_v(u)=\eta\Big(2^{k\ga}\big(\frac{u}{|u|}-e^k_v\big)\Big),\quad
\Ga^k_v(u)=\tilde{\Ga}^k_v(u)\Big(\sum\limits_{v\in\Theta_k}\tilde{\Ga}^k_v(u)\Big)^{-1}. $$
It is easy to see that each $\Ga^k_v$ is homogeneous of degree $0$, and $\sum_{v\in\Theta_{k}}\Ga^k_v(u)=1$ for all $u\neq0$ and all $k$.
Now define the operator $T_j^{n,s,v}$ by
\Be\label{e:19Tjnsv}
T_j^{n,s,v}h(x)=\int_{\R^d}\Om(x-y)\cdot \Ga^{n+s}_v(x-y)\cdot K_j(x-y)\cdot h(y)dy.
\Ee
Clearly $T_j=\sum_{v\in\Theta_{n+s}}T_j^{n,s,v}.$

To separate directions in frequency space, we define a Fourier multiplier operator by
$$\widehat{G_{k,v}h}(\xi)=\Phi(2^{k\ga}\inn{e^k_v}{{\xi}/{|\xi|}})\hat{h}(\xi),$$
where $\hat{h}$ is the Fourier transform of $h$ and $\Phi$ is a smooth, nonnegative, radial function satisfying $0\leq\Phi(x)\leq1$, $\Phi(x)=1$ for $|x|\leq2$, and $\Phi(x)=0$ for $|x|>4$.
We then split $T_j^{n,s,v}$ as
$
T_j^{n,s,v}=G_{n+s,v}T_j^{n,s,v}+({\rm{I}}-G_{n+s,v})T_j^{n,s,v}.
$

The next proposition provides an $L^2$ estimate for the part $G_{n+s,v}T_j^{n,s,v}$.

\begin{proposition}{\label{l:19L^2}}
Let $n\geq100$ and $s\geq0$. Assume that within each $T_j$, we have $\|\Om\|_{\infty}\leq2^{\iota (n+s)}\|\Om\|_{1}$. Then  with all the above notation,
$$\Big\|\sum\limits_{v\in\Theta_{n+s}}\sum\limits_j G_{n+s,v}T_j^{n,s,v}b_{n-j,s}\Big\|^2_{L_2(\A)}
\lc(s+1)2^{-(n+s)\ga+2(n+s)\iota}\lam\C_\Om\|f\|_{L_1(\A)}.$$
\end{proposition}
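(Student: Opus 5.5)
We follow Seeger's classical $L^2$ argument for the frequency-localized piece, adapted to the noncommutative setting, with the almost-orthogonality of the multipliers $G_{n+s,v}$ as the main tool. By Plancherel in $L_2(\A)$, the symbols $\Phi(2^{(n+s)\ga}\inn{e_v}{\xi/|\xi|})$ have bounded overlap in $v$ at each fixed $\xi$: for $\xi/|\xi|$ fixed, the number of $e_v$ with $|\inn{e_v}{\xi/|\xi|}|\le 4\cdot2^{-(n+s)\ga}$ is $\lesssim 2^{(n+s)\ga(d-2)}$. Cauchy--Schwarz in $v$ would therefore lose this factor. It will be compensated later, because $\card(\Theta_{n+s})\approx 2^{(n+s)\ga(d-1)}$, so each $v$ contributes only a $2^{-(n+s)\ga}$ share. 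The first step is thus
\[
\Big\|\sum_{v}\sum_j G_{n+s,v}T_j^{n,s,v}b_{n-j,s}\Big\|_{L_2(\A)}^2\lc 2^{(n+s)\ga(d-2)}\sum_v\Big\|\sum_j G_{n+s,v} T_j^{n,s,v}b_{n-j,s}\Big\|_{L_2(\A)}^2.
\]

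For fixed $v$ we would not expand $b$ itself but instead use the $TT^*$ structure. For the good part we would use the $L_\infty$ bound $\lam\C_\Om^{-1}$ of averages. We plan to replace $b_{n-j,s}$ by a rougher object whose $L_2$ norm is controlled. Concretely, we write $\|\sum_j G T_j^v b_{n-j,s}\|_2^2=\sum_{i,j}\langle T_i^{v*}G^2T_j^v b_{n-j,s}, b_{n-i,s}\rangle$ and bound the kernel of $T_i^{v*}G^2T_j^v$ pointwise. Since $\Om\Ga_v$ is supported in a cap of radius $2^{-(n+s)\ga}$ and $|\Om|\le 2^{\iota(n+s)}\|\Om\|_1$, the kernel of $T_j^{n,s,v}$ is bounded by $2^{\iota(n+s)}\|\Om\|_1 2^{-jd}\chi_{\text{tube}}$, where the tube has length $2^j$ and width $2^{j-(n+s)\ga}$. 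The kernel of $G$ restricted to scale $2^j$ has comparable localization. The composition $T_i^{v*}G^2T_j^v$ then has an $L_1$-to-$L_\infty$ type bound. We would prove the estimate $\|(T_i^v)^*G^2T_j^v\|_{L_1\to L_\infty}\lc 2^{2\iota(n+s)}\|\Om\|_1^2 2^{-(n+s)\ga(d-1)}2^{-\max(i,j)d}$. We would pair this with $\|b_{n-i,s}\|_{L_1(\A)}$ on one side and with the size bound on the other side. For $\|b_{n-j,s}\|_1$-type quantities we use Lemma \ref{l:19bbasic}(i). For the remaining factor we use the fact that averages of $b_{n-j,s}$ over cubes of $\Q_{n-j}$ are $\lc\lam\C_\Om^{-1}$, since $p_kf_kp_k\le \lam\C_\Om^{-1}$ up to the Cuculescu bound for parents. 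We would make this operator-valued via Lemma \ref{l:28holder} and \eqref{e:28conv}: the positivity of $f$ lets us write $p_{n-j}(f-f_{n-j+s})p_{n-j+s}$ as a product and apply Cauchy--Schwarz to the two projections separately.

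The main obstacle is the double sum over $i,j$ together with the non-symmetric terms $p_{n-j}fp_{n-j+s}$, which mix scales $2^{-(n-j)}$ and $2^{-(n-j+s)}$. Following the introduction, we would split into $|i-j|\le s$ and $|i-j|>s$. For $|i-j|\le s$ there are $O(s+1)$ diagonals, and on each we use the uniform kernel bound above. This produces the factor $(s+1)$. For $|i-j|>s$ we exploit the decay $2^{-|i-j|d}$ relative to the larger scale in the $L_1\to L_\infty$ bound. Because the cube scale $n-i+s$ of one bad piece is then coarser than the kernel scale of the other, each piece sees an averaged version of the other at bounded height $\lam\C_\Om^{-1}$. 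This makes that sum geometric and dominated by the diagonal case.

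Collecting the bounds, each $v$ contributes $\lc (s+1)2^{2\iota(n+s)}2^{-(n+s)\ga(d-1)}\lam\C_\Om^{-1}\|\Om\|_1^2\,\|f\|_1$, after summing over $j$ via Lemma \ref{l:19bbasic}(i). Summing over the $\approx 2^{(n+s)\ga(d-1)}$ values of $v$ and multiplying by the overlap factor $2^{(n+s)\ga(d-2)}$ gives $(s+1)2^{-(n+s)\ga+2\iota(n+s)}\lam\C_\Om\|f\|_1$, using $\|\Om\|_1^2\C_\Om^{-1}\le\C_\Om$. The delicate bookkeeping lies in the pointwise kernel estimate for $(T_i^v)^*G^2T_j^v$ once the Schwartz tails of $G$ are taken into account. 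We expect to handle this by truncating $G$'s kernel to the $2^{\max(i,j)}$-tube plus a rapidly decaying error.
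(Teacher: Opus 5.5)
Your outer structure matches the paper: Plancherel with the overlap factor $2^{(n+s)\gamma(d-2)}$, then a fixed-$v$ estimate split at $|i-j|=s$. But the sum over $i$ in each half of the fixed-$v$ estimate needs a mechanism your plan does not supply.

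For $i<j-s$ there is no geometric decay. In the bound $|\tilde K_j^{n,s,v}*K_i^{n,s,v}|\lesssim 2^{-jd}2^{(2\iota-\gamma(d-1))(n+s)}\|\Omega\|_1^2\chi_{E^{n,s,v}_{j,\sharp}}$, the factor $2^{-jd}$ is exactly compensated by the volume $2^{jd-(n+s)\gamma(d-1)}$ of the $j$-tube on which the height bound must be integrated. So each of the infinitely many $i<j-s$ contributes the same amount. The paper makes the $i$-sum converge in three steps:
\begin{enumerate}
\item It writes each of the four pieces of $b_{n-i,s}$ as, e.g., $p_{n-i}fp_{n-i+s}=(f^{1/2}p_{n-i})^*(f^{1/2}p_{n-i+s})$. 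Lemma~\ref{l:28holder} then reduces matters to $\sum_{i<j-s}|\tilde K_j^{n,s,v}*K_i^{n,s,v}|*(p_{n-i}fp_{n-i})$ and its analogue with $p_{n-i+s}$.
\item For $i<j-s$ the cubes of $\mathcal{Q}_{n-i}$ and $\mathcal{Q}_{n-i+s}$ have side $<2^{j-n-s}\le 2^{j-(n+s)\gamma}$. They are \emph{finer} than the width of the $j$-tube, not coarser as you say. Hence all cubes involved lie in one enlarged tube independent of $i$.
\item The bound $p_Qf_Qp_Q\lesssim\lambda\C_\Omega^{-1}p_Q$, combined with the disjointness $\sum_i p_{n-i}\le 1_\A$, controls the whole $i$-sum by $\lambda\C_\Omega^{-1}$ times the tube volume.
\end{enumerate}
Note also that the averages of $b_{n-j,s}$ themselves vanish by Lemma~\ref{l:19bbasic}(ii). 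A height bound exists only for the positive pieces $p_Qf_Qp_Q$, and it is usable only at scales finer than the tube.

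For $|i-j|\le s$, reusing the uniform kernel bound with an $L_1$--$L_\infty$ pairing fails, and this is exactly why the split is at $s$. The coarse projection $p_{n-i}$ lives on cubes of side $2^{i-n}$. For $i$ near $j$ this exceeds the tube width $2^{j-(n+s)\gamma}$ once $s$ is large compared with $n$. Then $\int_{x-E^{n,s,v}_{j,\sharp}}p_{n-i}fp_{n-i}$ has no bound of the form $\lambda\C_\Omega^{-1}|E^{n,s,v}_{j,\sharp}|$, since $f$ can concentrate in the thin tube.

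The paper instead uses Cauchy--Schwarz to reduce to $(s+1)\sum_j\|T_j^{n,s,v}p_{n-j}fp_{n-j+s}\|_{L_2(\A)}^2$ and estimates each term asymmetrically. Assuming $K_j^{n,s,v}\ge0$, it expresses $T_j^{n,s,v}(p_{n-j}fp_{n-j+s})(x)$ through $(K_j^{n,s,v}(x-\cdot)f)_{n-j+s}\lesssim 2^{-jd+(n+s)\iota}\|\Omega\|_1 f_{n-j+s}$ and applies \eqref{e:28conv}. Lemma~\ref{l:28holder} then places only the fine piece $p_{n-j+s}f_{n-j+s}p_{n-j+s}$ in operator norm. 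The coarse factor $\tau(p_{n-j}f_{n-j+s}p_{n-j})$ stays in trace and is summed via $\sum_j\varphi(p_{n-j}f)\le\|f\|_{L_1(\A)}$.

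Two smaller points.
\begin{itemize}
\item Drop $G_{n+s,v}$ at the convexity step, as the paper does; it is an $L_2$ contraction since $0\le\Phi\le1$. Its kernel is a homogeneous Calder\'on--Zygmund kernel with $|x|^{-d}$ tails, so your ``truncation plus rapidly decaying error'' is not available and would destroy the tube localization.
\item Your exponent count does not close. A per-$v$ bound $2^{-(n+s)\gamma(d-1)}$, summed over $\card(\Theta_{n+s})\approx 2^{(n+s)\gamma(d-1)}$ and multiplied by $2^{(n+s)\gamma(d-2)}$, gives $2^{(n+s)\gamma(d-2)}$, not $2^{-(n+s)\gamma}$. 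You need $2^{-2(n+s)\gamma(d-1)+2(n+s)\iota}$ per $v$. The cap factor enters once through $\|K_i^{n,s,v}\|_{L^1}$ and once through the tube volume.
\end{itemize}
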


For the remaining part $({\rm{I}}-G_{n+s,v})T_j^{n,s,v}$, the following estimate was proved in \cite[Page 1451, Lemma 3.5]{Lai24}.
\begin{proposition}\label{l:19L^1}
Set $L^{n,s,v}_j=({\rm{I}}-G_{n+s,v})T_j^{n,s,v}$. For $n\geq100$, $s\geq0$, and under the condition $\|\Om\|_{\infty}\leq2^{\iota (n+s)}\|\Om\|_{1}$ in each $T_j$, there exists a positive constant $\alpha$ such that
$$\sum\limits_j\sum_{v\in\Theta_{n+s}}\big\|L_j^{n,s,v}b_{n-j,s}\big\|_{L_1(\A)}\lc2^{-(n+s)\alpha}\C_\Om\|f\|_{L_1(\A)}.$$
\end{proposition}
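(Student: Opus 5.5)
The plan is to reduce the estimate to scalar $L^1$ bounds on the kernel of $L^{n,s,v}_j$, and then use the cancellation of the bad function (Lemma \ref{l:19bbasic}(ii)) together with the $L_1$ bound (Lemma \ref{l:19bbasic}(i)). Throughout, write $k=n+s$. The kernel of $L^{n,s,v}_j$ is scalar; call it $H^{v}_j$, so $\widehat{H^v_j}=(1-\Phi(2^{k\ga}\inn{e^k_v}{\xi/|\xi|}))\widehat{\Om\Ga^k_vK_j}$. Because it is scalar, Minkowski's inequality in the Bochner space $L_1(\R^d;L_1(\M))$ gives $\|H*h\|_{L_1(\A)}\le\|H\|_{L^1(\R^d)}\|h\|_{L_1(\A)}$. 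This is the only point where the noncommutativity enters, and it is harmless here.

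First, decompose $b_{n-j,s}=\sum_{Q\in\Q_{n-j+s}}b_{n-j,s}\chi_Q$. Each piece has mean zero on a cube of side $2^{j-k}$ centered at some $y_Q$. Then split $H^v_j=H^{v}_{j,\rm low}+H^v_{j,\rm high}$ with a Littlewood--Paley cutoff at frequency $|\xi|\approx 2^{-j}2^{k(\ga+\eps)}$, for a small $\eps>0$ to be fixed.

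\emph{High-frequency part.} Do not use cancellation; aim for $\|H^v_{j,\rm high}\|_{L^1}\lc 2^{-k\alpha}\|\Om\Ga^k_v\|_1$.
\begin{itemize}
\item Write $\widehat{\Om\Ga^k_vK_j}(\xi)=\int_{\S^{d-1}}\Om\Ga^k_v(\tet)\int_0^\infty\phi(2^{-j}r)e^{-ir\inn{\tet}{\xi}}\frac{dr}{r}\,d\si(\tet)$.
\item On the support of $\Ga^k_v$ we have $|\tet-e^k_v|\le 2^{-k\ga}$, and on the support of $1-\Phi$ we have $|\inn{e^k_v}{\xi}|\ge 2^{1-k\ga}|\xi|$. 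Together these give $|\inn{\tet}{\xi}|\gtrsim 2^{-k\ga}|\xi|$.
\item Integrating by parts $N$ times in $r$ then yields the pointwise decay $(2^{j-k\ga}|\xi|)^{-N}\|\Om\Ga^k_v\|_1$.
\item To convert this into an $L^1$ bound on each dyadic annulus $|\xi|\approx 2^{l}$, apply Cauchy--Schwarz on a slightly enlarged physical support, plus a rapidly decaying tail. The angular smoothness of $1-\Phi$ and $\Ga^k_v$ is at scale $2^{-k\ga}$, so this step loses at most $2^{Ck\ga}$ and a factor coming from $\|\Om\|_\infty\le 2^{\iota k}\|\Om\|_1$.
\item Choosing $N$ large makes the sum over $l\ge -j+k(\ga+\eps)$ bounded by $2^{-k\eps N+Ck(\ga+\iota)}\|\Om\Ga^k_v\|_1$ (up to $\|\Om\|_1$ normalisation, absorbed into $\C_\Om$).
\end{itemize}

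\emph{Low-frequency part.} Use the cancellation:
\[
H^v_{j,\rm low}*(b\chi_Q)(x)=\int_Q\big[H^v_{j,\rm low}(x-y)-H^v_{j,\rm low}(x-y_Q)\big]b(y)\,dy .
\]
It then suffices to show $\|\nabla H^v_{j,\rm low}\|_{L^1}\lc 2^{-j}2^{k(\ga+\eps)}2^{Ck\ga}\|\Om\Ga^k_v\|_1$. Since $|y-y_Q|\lc 2^{j-k}$, this gives a gain $2^{-k(1-C'\ga-\eps)}$. The gradient estimate follows the same way: the frequency restriction multiplies by $|\xi|\lc 2^{-j+k(\ga+\eps)}$, and the remaining multiplier is a mollified version of $\Om\Ga^k_vK_j$ whose $L^1$ norm is $\lc\|\Om\Ga^k_v\|_1$ up to the same $2^{Ck\ga}$ loss coming from the angular cutoff $1-\Phi$.

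Finally, sum. Using $\sum_v\|\Om\Ga^k_v\|_1\lc\|\Om\|_1\le\C_\Om$, summing over $Q$, and then over $j$ with Lemma \ref{l:19bbasic}(i), we obtain the bound $2^{-k\alpha}\C_\Om\|f\|_{L_1(\A)}$ provided $\ga,\eps,\iota$ are chosen small enough that all exponents $2^{Ck(\ga+\iota)}$ are beaten. (Here the $\sum_v$ is absorbed because each bound is proportional to $\|\Om\Ga^k_v\|_1$, not $\|\Om\|_1$.)

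The main obstacle is the kernel $L^1$ estimate for the multiplier $1-\Phi(2^{k\ga}\inn{e^k_v}{\xi/|\xi|})$. It is homogeneous of degree zero and therefore not an $L^1$ convolution kernel by itself. The first thing to try is to always pair it with a Littlewood--Paley annulus and count the angular derivatives carefully, so that the loss is only $2^{Ck\ga}$ and not worse. The parameter bookkeeping (the relation among $\ga$, $\iota$ and $\eps$ that yields some $\alpha>0$) is where the argument could fail if the constants are not tracked. This is exactly the computation carried out in \cite[Lemma 3.5]{Lai24}, which we would follow.
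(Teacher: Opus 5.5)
Your proposal is correct and follows the same route as the paper, whose "proof" consists only of citing \cite[Lemma 3.5]{Lai24}. That lemma is the Seeger-type scalar kernel estimate you describe. The noncommutativity enters only through Minkowski's inequality in $L_1(\R^d;L_1(\M))$. The frequencies are split at $|\xi|\approx 2^{-j+(n+s)(\ga+\eps)}$:
\begin{itemize}
\item the high part is handled by integration by parts in $r$, using $|\inn{\tet}{\xi}|\ge 2^{-(n+s)\ga}|\xi|$ whenever $\Ga^{n+s}_v(\tet)\neq0$ and $\Phi(2^{(n+s)\ga}\inn{e^{n+s}_v}{\xi/|\xi|})\neq1$;
\item the low part is handled by the mean-zero property of $b_{n-j,s}$ on cubes of $\Q_{n-j+s}$ together with a gradient bound.
\end{itemize}

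As a minor remark, along your route only $\|\Om\Ga^{n+s}_v\|_{L^1(\S^{d-1})}$ enters: the pointwise Fourier decay, the Cauchy--Schwarz step and Young's inequality never need more. So the truncation $\|\Om\|_\infty\le 2^{\iota(n+s)}\|\Om\|_1$ is not actually used here. The $2^{C(n+s)\ga}$ loss from $1-\Phi$ is absorbed by taking $\ga$ small, which is compatible with the paper's constraint $0<\iota<\ga/2$.
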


We now conclude the proof of \eqref{e:19boff}. It is sufficient to establish \eqref{e:19boff} under the assumption that for all fixed $n\geq100$ and $s\geq0$, we have $\|\Om\|_{\infty}\leq2^{\iota (n+s)}\|\Om\|_{1}$ inside $T_j$.  By  the Chebyshev inequality and the triangle  inequality, we obtain
\begin{equation*}
\begin{split}
&\quad \vphi \big(\big|\zet\sum_{n\geq100}\sum_{s\geq0}\sum_{j\in
\mathbb{Z}}T_jb_{n-j,s}  \zet\big|>\lambda/8\big)\\
&\lc {\lam^{-2}}\Big\|\zet\sum\limits_{n\geq100}\sum_{s\geq0}\sum\limits_{j}\sum\limits_{v\in\Theta_{n+s}}G_{n+s,v}T_j^{n,s,v}b_{n-j,s}  \zet\Big\|_{L_2(\A)}^2
\\
&\quad +\lam^{-1}\sum_{n\geq100}\sum_{s\geq0}\sum\limits_j\sum_{v\in\Theta_{n+s}}\big\|\zet L_j^{n,s,v}b_{n-j,s}  \zet\big\|_{L_1(\A)}\\
&=:I+II.
\end{split}
\end{equation*}

Choose $0<\iota<\frac{\ga}{2}<\fr{1}{2}$. Applying the triangle  inequality, then the H\"older inequality to eliminate $\zet$ (since $\zet$ is a projection), and finally Proposition \ref{l:19L^2}, we get
\begin{equation*}
\begin{split}
I&\lc\lam^{-2}\Big(\sum\limits_{n\geq100}\sum_{s\geq 0}\Big\|\zet\sum\limits_{j}
\sum\limits_{v\in\Theta_{n+s}}G_{n+s,v}T_j^{n,s,v}b_{n-j,s}\zet\Big\|_{L_2(\A)}\Big)^2\\
&\lc\lam^{-2}\Big(\sum\limits_{n\geq100}\sum_{s\geq 0}\big((s+1)2^{-(n+s)\ga+2(n+s)\iota}\C_\Om\lam\|f\|_{L_1(\A)}\big)^{\frac{1}{2}}\Big)^{2}
\lc \lam^{-1}\C_\Om\|f\|_{L_1(\A)}.
\end{split}
\end{equation*}

For $II$, using  the H\"older inequality to remove $\zet$ and then Proposition \ref{l:19L^1} yields
\Bes
\begin{split}
II&\lc\lam^{-1}\sum_{n\geq100}\sum_{s\geq0}\sum\limits_j\sum_{v\in\Theta_{n+s}}\big\|L_j^{n,s,v}b_{n-j,s}\big\|_{L_1(\A)}\\
&\lc\lam^{-1}\sum_{n\geq100}\sum_{s\geq0}2^{-(n+s)\alpha}\C_\Om\|f\|_{L_1(\A)}\lc\C_\Om\lam^{-1}\|f\|_{L_1(\A)}.
\end{split}
\Ees

Thus we have established \eqref{e:19boff} by means of Propositions \ref{l:19cur} and \ref{l:19L^2}. Their proofs will be presented in Section \ref{s:195}.

\vskip0.24cm

\section{Proof of Proposition \ref{l:28boff}}\label{s:196}
\vskip0.24cm
In this section, we prove Proposition \ref{l:28boff}.
Let us first introduce the Littlewood-Paley decomposition. Let $\psi$ be a radial
 $C^\infty$ function such that $\psi(\xi)=1$ for $|\xi|\leq 1$, $\psi(\xi)=0$ for $|\xi|\geq 2$,
and $0\leq\psi(\xi)\leq1$ for all $\xi\in\R^d$. Define $\beta_k(\xi)=\psi(2^k\xi)-\psi(2^{k+1}\xi)$;
then $\beta_k$ is supported in $\{\xi:2^{-k-1}\leq|\xi|\leq 2^{-k+1}\}$. Choose $\tilde\beta$ to be a radial
 $C^\infty$ function such that $\tilde{\beta}(\xi)=1$ for $\frac12\leq|\xi|\leq 2$, $\tilde{\beta}$ is supported in $\{\xi:\frac14\leq|\xi|\leq4\}$,
and $0\leq\tilde{\beta}(\xi)\leq1$ for all $\xi\in\R^d$. Set $\tilde{\beta}_k(\xi)=\tilde{\beta}(2^k\xi)$; then it is easy to see $\beta_k=\tilde{\beta}_k\beta_k$. Define the convolution operators $\Lam_k$ and $\tilde{\Lam}_k$
with the Fourier multipliers $\beta_k$ and $\tilde{\beta}_k$, respectively.
 That is,
$$\widehat{{\Lam}_kf}(\xi)=\beta_k(\xi)\hat{f}(\xi),\quad \ \widehat{\tilde{\Lam}_kf}(\xi)=\tilde{\beta}_k(\xi)\hat{f}(\xi).$$
Then by the construction of $\beta_k$ and $\tilde{\beta}_k$, we have
$\Lam_k=\tilde{\Lam}_k\Lam_k$ and the identity ${\rm{I}}=\sum\limits_{k\in\mathbb{Z}}\Lam_k.$

The author in \cite[Page 1468, Line -8]{Lai24} used the Schur lemma to show that for all $k-j+n+s>0$,
\Be\label{e:28gnj}
\|\Lam_{k}g_{n-j,s}\|_{L_2(\A)}^2\lc2^{j-n-s-k}\|g_{n-j,s}\|_{L_2(\A)}^2.
\Ee

Recall   $K_j $ is the kernel of the operator $T_j$, i.e.\ $K_j (x)=\Om(x)\phi_j(x)|x|^{-d}$.  Then we have the following useful inequality
\Be\label{e:19plan}
\sum_{j\in\Z}|\widehat{K_j }(\xi)|^2\lc\C^2_\Omega
\Ee
holds for almost every $\xi\in\R^d$ (see \eg \cite[(5-2)]{Lai24} for a proof).

By inserting the Littlewood-Paley decomposition, we write
\Be\label{e:28L2}
\begin{split}
\big\|\sum_j  T_jg_{n-j,s}\big\|^2_{L_2(\A)}&=\big\|\sum_k\sum_j  T_j\Lam_{k}g_{n-j,s}\big\|^2_{L_2(\A)}\\
&\lc\sum_k\tau\int\big|\sum_j  \widehat{K_j}(\xi)\beta_{k}(\xi)\widehat{g_{n-j,s}}(\xi)\big|^2d\xi
\end{split}
\Ee
where the second inequality follows from the Plancherel  theorem and the fact that the supports of $\beta_{k}$ are essentially disjoint (in fact $\supp(\beta_{k})$ only intersects $\supp(\beta_{k+l})$ for $l=-1,0,1$).
Next we split the sum over $j$ into two cases: $j< k+(n+s)\epsilon$ and $j\geq k+(n+s)\epsilon$, where $\epsilon\in(0,1)$ is a fixed constant.

First let us consider the sum over $j<k+(n+s)\epsilon$. By the convex inequality for operator-valued functions \eqref{e:28conv} and the estimate of $K_j$ in \eqref{e:19plan}, we get
\Be\label{e:28Kj11}
\begin{split}
&\quad\sum_{k}\tau\int\big|\sum_{j<k+(n+s)\epsilon}  \widehat{K_j}(\xi)\beta_{k}(\xi)\widehat{g_{n-j,s}}(\xi)\big|^2d\xi\\
&\lc\sum_{k}\tau\int\Big(\sum_{j}|\widehat{K_j}(\xi)|^2\Big)  \Big(\sum_{j<k+(n+s)\epsilon} \big|\beta_{k}(\xi)\widehat{g_{n-j,s}}(\xi)\big|^2\Big)d\xi\\
&\lc\C_\Om^2\sum_{k}\sum_{j<k+(n+s)\epsilon}\|\check{\beta}_{k}*g_{n-j,s}\|_{L_2(\A)}^2,
\end{split}
\Ee
where in the last inequality we also use the Plancherel theorem. Notice that $j<k+(n+s)\epsilon$ implies that $k-j+n+s>0$ since $\epsilon\in(0,1)$.
Applying \eqref{e:28gnj} and Lemma \ref{l:19gbasic}, the above estimate is bounded by
\Be\label{e:28jleqk}
\begin{split}
\C_\Om^2\sum_{j}\sum_{j-k<(n+s)\epsilon} 2^{2(j-k-n-s)}\|g_{n-j,s}\|_{L_2(\A)}^2
\lc\C_\Om2^{-2(1-\epsilon)(n+s)}\lam\|f\|_{L_1(\A)}.
\end{split}
\Ee

In the following we consider the sum over $j\geq k+(n+s)\epsilon$.  By the convex inequality for operator-valued functions \eqref{e:28conv}, we obtain
\Be\label{e:28kjgeq}
\begin{split}
&\quad\sum_{k}\tau\int\big|\sum_{j\geq k+(n+s)\epsilon}  \widehat{K_j}(\xi)\beta_{k}(\xi)\widehat{g_{n-j,s}}(\xi)\big|^2d\xi\\
&\leq\sum_k\tau\int\Big(\sum_{j\geq k+(n+s)\epsilon}|\widehat{K}_j({\xi})\beta_k(\xi)|^2\Big)\Big(\sum_j  |\tilde{\beta}_{k}(\xi)\widehat{g_{n-j,s}}(\xi)|^2\Big)d\xi.
\end{split}
\Ee

Recall that $K_j(x)=\frac{\Om(x)}{|x|^d}\phi_j(x)$. For fixed $k$ and $j$, we decompose $\Om(\tet)$ into two parts: $\Om_1(\tet)$
and $1-\Om_1(\tet)$, where $\Om_1(\tet)=\Om(\tet)\chi_{\{\tet\in\S^{d-1}:|\Om(\tet)|\leq2^{(j-k)\nu}\|\Om\|_1\}}$ for some constant $\nu\in(0,1/2)$.

We first consider $\Om_1$. Thus, we give an estimate of \eqref{e:28kjgeq} with the kernel $\Omega$ replaced by $\Omega_1$. By making a change of variable $x=r\tet$, we get
\Be\label{e:19kjpl}
|\widehat{ K_j}(\xi)|\leq\int_{\S^{d-1}}|\Om_1(\tet)|\Big|\int_{\R}e^{-ir\inn{\tet}{\xi}}\phi_j(r)r^{-1}dr\Big|d\si(\tet).
\Ee

It is easy to see that $|\int_{\R}e^{-ir\inn{\tet}{\xi}}\phi_j(r)r^{-1}dr|$ is finite. By integration by parts with respect to $r$, we get
\Bes
\Big|\int_{\R}e^{-ir\inn{\tet}{\xi}}\phi_j(r)r^{-1}dr\Big|
=\Big|\int_{\R}e^{-ir\inn{\tet}{\xi}}{\inn{\tet}{\xi}}^{-1}\pari_r[\phi_j(r)r^{-1}]dr\Big|\lc(2^j|\xi|)^{-1}|\inn{\tet}{\xi'}|^{-1},
\Ees
where $\xi'=\xi/|\xi|$. Interpolating these two estimates we get that for any $\del\in(1/2,1)$,
\Bes
\Big|\int_{\R}e^{-ir\inn{\tet}{\xi}}\phi_j(r)r^{-1}dr\Big|\lc(2^j|\xi|)^{-\del}|\inn{\tet}{\xi'}|^{-\del}.
\Ees
Plugging the above estimate into \eqref{e:19kjpl} and using the fact $\int_{\S^{d-1}}|\inn{\tet}{\xi'}|^{-\del}d\si(\tet)<\infty$, we obtain
\Be
\sum_{j\geq k+(n+s)\epsilon}|\widehat{K_j}(\xi)\beta_k(\xi)|^2\lc\sum_{j\geq k+(n+s)\epsilon}(2^{j-k})^{-2\del+2\nu}\|\Om\|^2_1
\lc2^{-(n+s)\epsilon(2\del-2\nu)}\|\Om\|^2_1.
\Ee
Combining the above estimate, in the case of $\Om_1$ we hence get
\Be\label{e:28Kj12}
\begin{split}
\eqref{e:28kjgeq}&\lc2^{-(n+s)\eps(2\del-2\nu)}\|\Om\|_1^2\sum_{k\in\Z}\tau\int\sum_j  \big|\tilde{\beta}_k(\xi)\widehat{g_{n-j,s}}(\xi)\big|^2d\xi\\
&\lc2^{-(n+s)\eps(2\del-2\nu)}\|\Om\|_1^2 \sum_j\|g_{n-j,s}\|_{L_2(\A)}^2
\lc2^{-(n+s)\eps(2\del-2\nu)}\C_\Om\lam\|f\|_{L_1(\A)},
\end{split}
\Ee
where in the second inequality we apply the supports of $\tilde\beta_{k}$ are essentially disjoint, the Plancherel theorem, and the last inequality follows from Lemma \ref{l:19gbasic}.

Next we consider the other case $\Om_2$. In this case, we have
\Be
|\widehat{K}_j(\xi)|\lc\int_{\{\theta\in\S^{d-1}:|\Om(\tet)|>2^{(j-k)\nu}\|\Om\|_1\}}|\Om(\tet)|d\sigma(\tet).
\Ee
Hence $\eqref{e:28kjgeq}$ is majorized by
\Be\label{e:28Kj2}
\begin{split}
\sum_k\tau\int&\Big(\sum_{l\geq(n+s)\epsilon}\Big|\int_{\{\theta\in\S^{d-1}:|\Om(\tet)|>2^{l\nu}\|\Om\|_1\}}|\Om(\tet)|d\sigma(\tet)\Big|^2\Big)\Big(\sum_j  |\tilde{\beta}_{k}(\xi)\widehat{g_{n-j,s}}(\xi)|^2\Big)d\xi\\
&\lc \sum_{l\geq(n+s)\epsilon}\Big(\int_{\{\theta\in\S^{d-1}:|\Om(\tet)|>2^{l\nu}\|\Om\|_1\}}|\Om(\tet)|d\sigma(\tet)\Big)^2 \sum_{j\in\Z}\|g_{n-j,s}\|_{L_2(\A)}^2\\
&\lc\sum_{l\geq(n+s)\epsilon}\Big(\int_{\{\theta\in\S^{d-1}:|\Om(\tet)|>2^{l\nu}\|\Om\|_1\}}|\Om(\tet)|d\sigma(\tet)\Big)^2\C_\Om^{-1}\lam\|f\|_{L_1(\A)},
\end{split}
\Ee
where in the first inequality we also use the Plancherel theorem, and the second inequality follows from Lemma \ref{l:19gbasic}.
For convenience, set
$$B(s,n)=\sum_{l\geq(n+s)\epsilon}\Big(\int_{\{\theta\in\S^{d-1}:|\Om(\tet)|>2^{l\nu}\|\Om\|_1\}}|\Om(\tet)|d\sigma(\tet)\Big)^2\C_\Om^{-1}.$$

Now by \eqref{e:28L2}, \eqref{e:28Kj11}, \eqref{e:28jleqk}, \eqref{e:28Kj12} and \eqref{e:28Kj2}, we conclude that
\Bes
\big\|\sum_j  T_jg_{n-j,s}\big\|^2_{L_2(\A)}\lc D(s,n)\lam\|f\|_{L_1(\A)}
\Ees
where the constant $D(s,n)$ is given by
\Bes
D(s,n)=2^{-2(1-\epsilon)(n+s)}\C_\Om+2^{-(n+s)(2\del-2\nu)}\C_\Om+B(s,n).
\Ees

To finish the proof of Proposition \ref{l:28boff}, it remains to show that the constant $D(s,n)$ satisfies the decay estimate \eqref{e:19boffdecay}. It is easy to see that the first and second factors in $D(s,n)$ satisfy \eqref{e:19boffdecay}. In the following, we prove that $B(s,n)$ satisfies \eqref{e:19boffdecay}. By an elementary calculation, we get
\Bes
\begin{split}
\sum_{n\geq 100}\sum_{s\geq1}&\Big(B(s,n)\Big)^{\frac12}\leq\sum_{n\geq 100}\sum_{s\geq1}\Big[\int_{\alpha\in\S^{d-1}}\int_{\tet\in\S^{d-1}}|\Om(\tet)||\Om(\alpha)|\times\\
&\#\Big\{l:\, (n+s)\epsilon\leq l <\frac{1}{\nu}
\min\{\log^+\frac{|\Om(\tet)|}{\|\Om\|_{1}},\log^+\frac{|\Om(\alpha)|}{\|\Om\|_{1}}\} \Big\}d\si(\tet)d\si(\alpha)\Big]^{\frac12}\C_\Om^{-\frac12}.
\end{split}
\Ees
Applying the H\"older inequality, the above estimate is bounded by
\Bes
\begin{split}
&\quad\C_\Om^{-\frac12}\Big[\sum_{n\geq 100}\sum_{s\geq1}\int_{\tet\in\S^{d-1}}\#\Big\{l:\, (n+s)\epsilon\leq l <\frac{1}{\nu}
\log^+\frac{|\Om(\tet)|}{\|\Om\|_{1}} \Big\}^{\frac12}|\Om(\tet)|d\si(\tet)\Big]^{\frac12}\\
&\quad\quad\times\Big[\sum_{n\geq 100}\sum_{s\geq1}\int_{\alpha\in\S^{d-1}}\#\Big\{l:\, (n+s)\epsilon\leq l <\frac{1}{\nu}
\log^+\frac{|\Om(\alpha)|}{\|\Om\|_{1}} \Big\}^{\frac12}|\Om(\alpha)|d\si(\alpha)\Big]^{\frac12}\\
&\leq\C_\Om^{-\frac12}\int_{\tet\in\S^{d-1}}
\#\Big\{n:\, 100\leq n <\frac{1}{\epsilon \nu}
\log^+\frac{|\Om(\tet)|}{\|\Om\|_{1}} \Big\}
\#\Big\{s:\, 1 \leq s <\frac{1}{\epsilon \nu}
\log^+\frac{|\Om(\tet)|}{\|\Om\|_{1}} \Big\}\\
&\quad\quad\times\#\Big\{l:\, 1\leq l <\frac{1}{\nu}
\log^+\frac{|\Om(\tet)|}{\|\Om\|_{1}} \Big\}^{\frac12}|\Om(\tet)|d\si(\tet)\\
&\lc\C_\Om^{-\frac12}\int_{\tet\in\S^{d-1}}\Big(1+\log^+\frac{|\Om(\tet)|}{\|\Om\|_{1}} \Big)^{\frac52}|\Om(\tet)|d\si(\tet)\lc\C_\Om^{\frac12},
\end{split}
\Ees
which is the desired estimate. Hence we have completed the proof.
$\hfill{} \Box$
\vskip0.24cm

\section{Proofs of Propositions \ref{l:19cur} and \ref{l:19L^2}}\label{s:195}
\vskip0.24cm

\subsection{Proof of Proposition \ref{l:19cur}}\label{s:1951}\quad
\vskip0.24cm
Denote the kernel of the operator $T_{j,\iota}^{n,s}$ by $
K_{j,\iota}^{n,s}(x-y)$. Then it is easy to see that
$$\big\|K_{j,\iota}^{n,s}\big\|_{L_1(\R^d)}\lc\int_{D^{\iota}}\int_{2^{j-1}}^{2^{j+1}}|\Om(\theta)|r^{d-1}2^{-jd}dr d\si(\theta)
\lc\int_{D^\iota}|\Om(\theta)|d\si(\theta).$$
Therefore by the Chebyshev inequality, the triangle inequality, and  the H\"older inequality to remove the projection $\zet$, we get
\Bes
\begin{split}
\vphi \big(\big|\zet\sum_{n\geq100}\sum_{s\geq0}\sum_{j\in
\mathbb{Z}}T_{j,\iota}^{n,s}b_{n-j,s}  \zet\big|>\lambda\big)&\leq\lam^{-1}\sum_{n\geq100}\sum_{s\geq0}\sum_{j\in
\mathbb{Z}}\|T_{j,\iota}^{n,s}b_{n-j,s}  \|_{L_1(\A )}\\
&\leq{\lam}^{-1}\sum_{n\geq100}\sum_{s\geq0}\sum_{j\in
\mathbb{Z}}\|K_{j,\iota}^{n,s}\|_{L_1(\R^d)}\|b_{n-j,s}\|_{L_1(\A)}\\
&\lc{\lam}^{-1}\sum_{n\geq100}\sum_{s\geq0}\int_{D^{\iota}}|\Om(\theta)|d\si(\theta)\sum_{j}\|b_{n-j,s}\|_{L_1(\A)}.
\end{split}
\Ees
Now applying property (i) of Lemma \ref{l:19bbasic}, the above estimate is bounded by
\Bes
\begin{split}
&{\lam}^{-1}\|f\|_{L_1(\A)}\int_{\S^{d-1}}\#\big\{(n,s):\, n\geq 100, s\geq0, 2^{\iota (n+s)}\leq \frac{|\Om(\tet)|}{\|\Om\|_{1}}\big\}|\Om(\tet)|d\si(\tet)\\
&\lc{\lam}^{-1}\|f\|_{L_1(\A)}\int_{\S^{d-1}}|\Om(\tet)|\big(1+\log^+(|\Om(\tet)|/\|\Om\|_{1})\big)^2 d\si(\tet)\lc{\lam}^{-1}\C_\Om\|f\|_{L_1(\A)},
\end{split}
\Ees
which completes the proof.
$\hfill{} \Box$
\vskip0.24cm

\subsection {Proof of Proposition \ref{l:19L^2}}\label{s:1952}\quad
\vskip0.24cm
First we have the following observation about the orthogonality of the support of $\mathcal{F}(G_{k,v})$: For a fixed $k\geq 100$, we get \begin{equation}\label{e:19obser}
\sup\limits_{\xi\neq0}\sum\limits_{v\in\Theta_{k}}|\Phi^2(2^{k\ga}\inn{e^k_v}{\xi/|\xi|})|\lc2^{k\ga(d-2)}.
\end{equation}
In fact, by homogeneity of $\Phi^2(2^{k\ga}\inn{e^k_v}{\xi/|\xi|})$, it suffices to take the supremum over the sphere $\S^{d-1}$. For $|\xi|=1$ and $\xi\in\supp\ \Phi^2(2^{k\ga}\inn{e^k_v}{\xi/|\xi|})$, denote by $\xi^{\bot}$ the hyperplane perpendicular to $\xi$. Then it is easy to see that
\begin{equation}\label{e:19e^n_v}
\text{dist}(e^k_v,\xi^\bot)\lc2^{-k\ga}.
\end{equation}
Since the mutual distance of the $e^k_v$'s is bounded below by $2^{-k\ga-4}$, there are at most $2^{k\ga(d-2)}$ vectors satisfying
(\ref{e:19e^n_v}). This yields (\ref{e:19obser}).

Applying the Plancherel theorem, the convex inequality for operator-valued functions \eqref{e:28conv}, the estimate \eqref{e:19obser} and finally the Plancherel theorem again, we get
\begin{equation}\label{e:19L^2key}
\begin{split}
&\quad\Big\|\sum\limits_{v\in\Theta_{n+s}}\sum_jG_{n+s,v}T^{n,s,v}_jb_{n-j,s}\Big\|^2_{L_2(\A)}\\
&=(2\pi)^{-\frac{d}{2}}\int_{\R^d}
\tau\Big(\Big|\sum\limits_{v\in\Theta_{n+s}}\Phi(2^{(n+s)\ga}\inn{e^{n+s}_v}{\xi/|\xi|})
\sum_j\mathcal{F}\big(T^{n,s,v}_jb_{n-j,s}\big)(\xi)\Big|^2\Big)d\xi\\
&\lc\int_{\R^d}\sum\limits_{v\in\Theta_{n+s}}\Phi^2(2^{(n+s)\ga}\inn{e^{n+s}_v}{\xi/|\xi|})
 \tau\Big(\sum\limits_{v\in\Theta_{n+s}}\Big|\sum_j\mathcal{F}\big(T^{n,s,v}_jb_{n-j,s}\big)(\xi)\Big|^2\Big)d\xi\\
&\lc2^{(n+s)\ga(d-2)} \sum\limits_{v\in\Theta_{n+s}}\big\|\sum_jT^{n,s,v}_jb_{n-j,s}\big\|^2_{L_2(\A)}.
\end{split}
\end{equation}

We claim that for a fixed $e^{n+s}_v$, the following estimate holds
\begin{equation}\label{e:19L^2}
\Big\|\sum_j T^{n,s,v}_jb_{n-j,s}\Big\|^2_{L_2(\A)}\lc(s+1)2^{-2(n+s)\ga(d-1)+2(n+s)\iota}\lam\|\Om\|_{1}\|f\|_{L_1(\A)}.
\end{equation}
Then, since $\card(\Theta_{n+s})\lc2^{(n+s)\ga(d-1)}$, applying (\ref{e:19L^2key}) and (\ref{e:19L^2}) we get
\begin{equation*}
\begin{split}
\Big\|\sum\limits_{v\in\Theta_{n+s}}\sum\limits_jG_{n+s,v}T^{n,s,v}_jb_{n-j,s}\Big\|^2_{L_2(\A)}\lc(s+1)2^{-(n+s)\ga+2(n+s)\iota}\lam\C_\Om\|f\|_{L_1(\A)},
\end{split}
\end{equation*}
which is the asserted bound of Proposition \ref{l:19L^2}. Thus, to finish the proof of Proposition \ref{l:19L^2}, it is sufficient to prove (\ref{e:19L^2}).

Recall the definition of $b_{n-j,s}$ in \eqref{e:19db}.
By the triangle inequality, to prove \eqref{e:19L^2} it is enough to prove the following lemma.
\begin{lemma}\label{L:28fnjs}
With all the above notation, we have
  \Bes
\begin{split}
\Big\|\sum_j T^{n,s,v}_jp_{n-j}f_{n-j+s}p_{n-j+s}\Big\|^2_{L_2(\A)}
&\lc (s+1)2^{-2(n+s)\ga(d-1)+2(n+s)\iota}\lam\|\Om\|_{1}\|f\|_{L_1(\A)};\\
\Big\|\sum_j T^{n,s,v}_jp_{n-j+s}f_{n-j+s}p_{n-j}\Big\|^2_{L_2(\A)}
&\lc (s+1)2^{-2(n+s)\ga(d-1)+2(n+s)\iota}\lam\|\Om\|_{1}\|f\|_{L_1(\A)};\\
\Big\|\sum_j T^{n,s,v}_jp_{n-j}fp_{n-j+s}\Big\|^2_{L_2(\A)}&\lc (s+1)2^{-2(n+s)\ga(d-1)+2(n+s)\iota}\lam\|\Om\|_{1}\|f\|_{L_1(\A)};\\
\Big\|\sum_j T^{n,s,v}_jp_{n-j+s}fp_{n-j}\Big\|^2_{L_2(\A)}&\lc (s+1)2^{-2(n+s)\ga(d-1)+2(n+s)\iota}\lam\|\Om\|_{1}\|f\|_{L_1(\A)}.
\end{split}
\Ees
\end{lemma}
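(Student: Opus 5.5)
The plan is to prove all four estimates by a $TT^*$ argument on the single direction $e^{n+s}_v$. I write out only the third one, $h_j:=p_{n-j}fp_{n-j+s}$. The fourth is its adjoint-type twin, and the first two are easier because $f$ is replaced by $f_{n-j+s}$.

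\textbf{Step 1: expansion and kernel bounds.} Expanding the square gives
\[
\Big\|\sum_j T^{n,s,v}_jh_j\Big\|^2_{L_2(\A)}=\sum_{i,j}\vphi\big(h_i^*\,(T^{n,s,v}_i)^*T^{n,s,v}_j h_j\big).
\]
Here $(T^{n,s,v}_i)^*T^{n,s,v}_j$ is convolution with $\widetilde{K^{v}_i}*K^{v}_j$, where $\widetilde{K}(x)=\overline{K(-x)}$. Under the standing assumption $\|\Om\|_\infty\le 2^{\iota(n+s)}\|\Om\|_1$, the kernel $K^v_j$ is supported in the tube of length $2^{j+1}$ in direction $e^{n+s}_v$ and cross-section radius $\approx 2^{j-(n+s)\ga}$. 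Its size there is $\lc 2^{\iota(n+s)}\|\Om\|_1 2^{-jd}$.

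This gives two elementary facts, for $i\le j$:
\[
\|\widetilde{K^v_i}*K^v_j\|_{L_\infty}\lc 2^{2\iota(n+s)}\|\Om\|_1^2\,2^{-(i+j)d}\,2^{id-(n+s)\ga(d-1)i/i},
\]
that is, the sup of $|K^v_j|$ times the $L_1$ norm of $K^v_i$, which is $\lc 2^{\iota(n+s)}\|\Om\|_1 2^{-(n+s)\ga(d-1)}$. The convolution is also supported in a tube of size $2^{j}\times(2^{j-(n+s)\ga})^{d-1}$. Summed over the tube, the total mass of $|\widetilde{K^v_i}*K^v_j|$ is $\lc 2^{2\iota(n+s)}\|\Om\|_1^2 2^{-2(n+s)\ga(d-1)}$. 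This mass is exactly the gain claimed in the lemma.

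\textbf{Step 2: removing the noncommutativity.} Since $f\ge0$ and $h_j=p_{n-j}f^{1/2}\cdot f^{1/2}p_{n-j+s}$, I will use the operator Cauchy--Schwarz inequality \eqref{e:28conv} and Lemma \ref{l:28holder} in the variable $y$. This bounds each pairing by a geometric mean of two symmetric quantities, in which the kernel $|\widetilde{K^v_i}*K^v_j|$ is integrated against $p_kfp_k$ and $p_{k+s}fp_{k+s}$ respectively.

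The tube width $2^{j-(n+s)\ga}$ is at least the cube size $2^{j-n}$, because $\ga<1$. So, up to harmless enlargement, the kernel is essentially constant on cubes of $\Q_{n-j}$, and $f$ may be replaced by $f_{n-j}$, respectively by $f_{n-j+s}$ on the finer cubes. After that replacement, property (ii) of Lemma \ref{l:28cucu} together with $p_k\le q_{k-1}$ gives
\[
p_kf_kp_k\lc 2^d\lam\C_\Om^{-1}p_k .
\]
Consequently each pairing is bounded by the $L_1$ mass of the kernel, times $\lam$, times $\|p_kfp_k\|_{L_1}$, which is controlled through Lemma \ref{l:19bbasic}(i) and $\sum_k\|p_kfp_k\|_1\le\|f\|_1$.

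\textbf{Step 3: splitting the double sum, and the main obstacle.} I split according to $|i-j|\le s$ and $|i-j|>s$.

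For $|i-j|\le s$, I use Cauchy--Schwarz in $(i,j)$, or simply bound each diagonal band separately. This costs the factor $s+1$, and each band is handled by Step 2, giving $(s+1)2^{-2(n+s)\ga(d-1)+2\iota(n+s)}\lam\|\Om\|_1\|f\|_1$.

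For $|i-j|>s$ with $i<j$, the scale $2^{n-i+s}$ of the finer cube at level $i$ is coarser than $2^{n-j}$. Here I would exploit the extra decay of $\|\widetilde{K^v_i}*K^v_j\|_\infty$, namely an additional factor $2^{-(j-i)}$ from the short length of the smaller tube. This should make the sum over $j-i>s$ geometric and dominated by the diagonal bound.

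I expect this off-diagonal range to be the hardest step. The two projections come from different dyadic generations, so the averaging argument of Step 2 must be run with the kernel regarded as constant on $\Q_{n-i}$-cubes while $h_j$ lives on $\Q_{n-j+s}$. I would first try to handle this by pairing $p_{n-i}$ with the coarser cube family and using $\xi_Q\le\xi_{\widehat Q}$ to compare the projections across generations.
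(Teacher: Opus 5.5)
There is a genuine gap, in the off-diagonal range you flagged and also in the diagonal band.

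\textbf{Off-diagonal terms.} For $i<j-s$ there is no extra factor $2^{-(j-i)}$. The norm $\|K^{n,s,v}_i\|_{L_1}\lc 2^{\iota(n+s)-(n+s)\ga(d-1)}\|\Om\|_1$ does not depend on $i$, and the localized kernel $\Om\Ga^{n+s}_v$ has no cancellation. So the bound $|\widetilde{K}^{n,s,v}_j*K^{n,s,v}_i|\lc 2^{-jd}2^{(2\iota-\ga(d-1))(n+s)}\|\Om\|_1^2\chi_{E^{n,s,v}_{j,\sharp}}$ is sharp; for instance, take $\Om\geq 0$ on the cap. Estimating pair by pair and then summing over $i<j-s$ therefore diverges.

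The missing idea is to do the $i$-sum inside the $L_\infty$ norm and let disjointness of the $p_k$ do the work:
\begin{itemize}
\item Bound this part by $\sum_j\|p_{n-j+s}fp_{n-j}\|_{L_1(\A)}\,\big\|\sum_{i<j-s}\widetilde K^{n,s,v}_j*K^{n,s,v}_i*(p_{n-i}fp_{n-i+s})\big\|_{\A}$. The first factor is at most $\|f\|_{L_1(\A)}$ by Cauchy--Schwarz.
\item Split the second factor via Lemma \ref{l:28holder} into the sums $\sum_{i<j-s}|\widetilde K_j^{n,s,v}*K_i^{n,s,v}|*(p_kfp_k)$, $k\in\{n-i,n-i+s\}$.
\item Insert the $i$-independent majorant above, and enlarge the tube to the union of the dyadic cubes it meets.
\item Because $i<j-s$, every $Q\in\Q_{n-i}$ (a fortiori every $Q\in\Q_{n-i+s}$) has $\ell(Q)<2^{j-n-s}\le 2^{j-(n+s)\ga}$. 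So all these cubes lie in one enlarged tube of measure $\lc 2^{jd-(n+s)\ga(d-1)}$.
\item Then $p_Qf_Qp_Q\lc\lam\C_\Om^{-1}p_Q$ together with $\sum_ip_{n-i}\le 1_\A$ bounds the whole $i$-sum by $\lam\C_\Om^{-1}$ times that measure.
\end{itemize}
This is why the split is made at $s$: the threshold is what lets the coarser level $n-i$ fit inside the tube. Note also that the argument uses a constant majorant on an enlarged tube plus positivity of $p_Qfp_Q$, not approximate constancy of the rough kernel.

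\textbf{Diagonal band.} Step 2 asserts $2^{j-(n+s)\ga}\ge 2^{j-n}$ because $\ga<1$. This is false once $(n+s)\ga>n$, i.e.\ for $s>n(1-\ga)/\ga$. The condition $\ga<1$ only gives $2^{j-(n+s)\ga}\ge 2^{j-n-s}$, which is the side length of $\Q_{n-j+s}$-cubes.

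So after reducing the band to $(s+1)\sum_j\|T^{n,s,v}_jp_{n-j}fp_{n-j+s}\|^2_{L_2(\A)}$, a symmetric geometric mean that puts $p_{n-j}fp_{n-j}$ in $L_\infty$ loses a factor $2^{\frac{d-1}{2}((n+s)\ga-n)}$. The stated bound is then not reached for large $s$.

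What is needed is an asymmetric H\"older step. For positive $A$,
\[
\tau(|p_{n-j}Ap_{n-j+s}|^2)\le\tau(p_{n-j}Ap_{n-j})\,\|p_{n-j+s}Ap_{n-j+s}\|_{\M}.
\]
The paper applies this, after reducing to $K_j^{n,s,v}\ge0$, to $A=(K^{n,s,v}_j(x-\cdot)f(\cdot))_{n-j+s}\lc 2^{-jd+(n+s)\iota}\|\Om\|_1f_{n-j+s}$. Then only the fine level $n-j+s$, whose cubes do fit in the tube, needs the bound $\lam\C_\Om^{-1}$. The coarse level enters only through $\vphi(p_{n-j}f_{n-j+s}p_{n-j})=\vphi(p_{n-j}fp_{n-j})$, which sums over $j$ to at most $\|f\|_{L_1(\A)}$.
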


\begin{proof}
We will only give the detailed proof of the third inequality, since the other three terms can be handled similarly. Indeed, the fourth inequality is symmetric to the third one, so the proof is the same. For the first and second inequalities, we consider the conditional expectation $f_{n-j+s}$, which is easier to deal with than the function appearing in the third one. One can easily adapt the proof of the third inequality to the setting of the first and second inequalities, and we will point out how to modify the arguments at the end of the proof.

Write $$\Big\|\sum_j T^{n,s,v}_jp_{n-j}fp_{n-j+s}\Big\|^2_{L_2(\A)}=:I+II$$ where
\Bes
\begin{split}
I=:\sum_{j\in\Z}\sum_{|i-j|> s}\tau\int\big(T^{n,s,v}_jp_{n-j} f p_{n-j+s}(x)\big)^*
T^{n,s,v}_ip_{n-i} f p_{n-i+s}(x)dx
\end{split}
\Ees
and
\Bes
\begin{split}
II=:\sum_{j\in\Z}\sum_{|i-j|\leq s}\tau\int\big(T^{n,s,v}_jp_{n-j} f p_{n-j+s}(x)\big)^*
T^{n,s,v}_ip_{n-i} f p_{n-i+s}(x)dx.
\end{split}
\Ees
It should be pointed out that the split above has not appeared before. In fact it is not necessary in the classical arguments (see \eg \cite{See96}) but is a key step in the noncommutative setting. Therefore, it suffices to establish the required estimates for the terms $I$ and $II$, respectively.
\vskip0.24cm
\textbf{Proof of the term $I$}. Our main strategy for this term is the $TT^*$ method. Rewrite $I$ as the sum of
\Be\label{e:28ijgeq21}
\sum_{j\in\Z}\sum_{i<j-s}\tau\int\big(T^{n,s,v}_jp_{n-j} f p_{n-j+s}(x)\big)^* T^{n,s,v}_ip_{n-i} f p_{n-i+s}(x)dx
\Ee
and
\Be\label{e:28ijgeqs22}
\sum_{i\in\Z}\sum_{j<i-s}\tau\int T^{n,s,v}_ip_{n-i} f p_{n-i+s}(x)\big(T^{n,s,v}_jp_{n-j} f p_{n-j+s}(x)\big)^*dx
\Ee
where we use the trace property $\tau(ab)=\tau(ba)$ to obtain the second term.
In the following we only give an estimate for \eqref{e:28ijgeq21}, since \eqref{e:28ijgeqs22} can be dealt with in a similar manner.

Set the kernel of $T_j^{n,s,v}$ as
$
K_j^{n,s,v}(x)=\Ga^{n+s}_v(x)\Om(x)\phi_j(x)|x|^{-d}.
$
Define $\tilde{K}_j^{n,s,v}(x)=K_j^{n,s,v}(-x)$. Then, applying the Fubini theorem to change the order of integration, \eqref{e:28ijgeq21} equals
\Bes
\begin{split}
\sum_j\tau\int{p_{n-j+s} f p_{n-j}}(x)\Big[\sum_{i< j-s}(\tilde{K}_j^{n,s,v})*K_i^{n,s,v}*{(p_{n-i} f p_{n-i+s})}(x)\Big]dx.
\end{split}
\Ees
By  the H\"older inequality, this is majorized by
\Bes
\begin{split}
\sum_j\|{p_{n-j+s} f p_{n-j}}\|_{L_1(\A)}\Big\|\sum_{i< j-s}(\tilde{K}_j^{n,s,v})*K_i^{n,s,v}*{(p_{n-i} f p_{n-i+s})}\Big\|_{\A}.
\end{split}
\Ees

Before proceeding with the proof further, we first claim that
\Be\label{e:28sumjfnjs}
\sum_j\|{p_{n-j+s} f p_{n-j}}\|_{L_1(\A)}\lc\|f\|_{L_1(\A)},
\Ee
and for any $j\in\Z$,
\Be\label{e:28sumijs}
\Big\|\sum_{i< j-s}\tilde{K}_j^{n,s,v}*K_i^{n,s,v}*{(p_{n-i} f p_{n-i+s})}\Big\|_{\A}\lc2^{-2(n+s)\ga(d-1)+2(n+s)\iota}\lam\|\Om\|_{1}.
\Ee
By the above two claims \eqref{e:28sumjfnjs} and \eqref{e:28sumijs}, we immediately obtain the desired bound for $I$.

Let us now prove \eqref{e:28sumjfnjs}. By  the H\"older inequality,
\Be\label{e:28pnjin}
\begin{split}
\sum_j\|{p_{n-j+s} f p_{n-j}}\|_{L_1(\A)}&\leq\Big(\sum_j\|p_{n-j+s} f ^{1/2}\|_{L_2(\A)}^2\Big)^{\frac12}
\Big(\sum_j\| f ^{1/2}p_{n-j}\|_{L_2(\A)}^2\Big)^{\frac12}\\
&=\Big(\sum_j\vphi(p_{n-j+s}f)\Big)^{\frac12}\Big(\sum_j\vphi(p_{n-j}f)\Big)^{\frac12}\leq\|f\|_{L^1(\A)},
\end{split}
\Ee
which is exactly the estimate \eqref{e:28sumjfnjs}. We also point out that by the H\"older inequality and the trace invariance of conditional expectations,
\Be\label{e:28pnjinaverage}
\begin{split}
\sum_j\|{p_{n-j+s} f_{n-j+s} p_{n-j}}\|_{L_1(\A)}\leq\|f\|_{L^1(\A)}.
\end{split}
\Ee

Now we turn to the estimate in \eqref{e:28sumijs}. Fix $j\in\Z$. By the Hilbert-valued H\"older inequality in Lemma \ref{l:28holder}, the left-hand side of \eqref{e:28sumijs} is majorized by
\Be\label{e:28sumijs2}
\begin{split}
\Big\|\Big(\sum_{i< j-s}| \tilde{K}_j^{n,s,v} &*K_i^{n,s,v}|*{(p_{n-i+s} f p_{n-i+s})}\Big)^{\frac12}\Big\|_{L_\infty(\A)}\\
&\quad\times\Big\|\Big(\sum_{i< j-s}| \tilde{K}_j^{n,s,v} *K_i^{n,s,v}|*{(p_{n-i} f p_{n-i})}\Big)^{\frac12}\Big\|_{L_\infty(\A)}.
\end{split}
\Ee
For convenience, set
\Bes
S_{j,i,1}(x) =| \tilde{K}_j^{n,s,v} *K_i^{n,s,v}|*{(p_{n-i+s} f p_{n-i+s})},
\Ees
and
\Bes
S_{j,i,2}(x) =| \tilde{K}_j^{n,s,v} *K_i^{n,s,v}|*{(p_{n-i} f p_{n-i})}.
\Ees

We first recall the following noncommutative Calder\'on--Zygmund property
\Be\label{e:28PQ}
p_{n-i+s} f_{n-i+s} p_{n-i+s}\lc\lambda\C_\Om^{-1}p_{n-i+s}\quad\text{or}\quad p_Qf_Qp_Q\lc\lambda\C_\Om^{-1}p_{Q}
\Ee
for any $Q\in\mathcal{Q}_k$, which is a consequence of property (ii) in Lemma \ref{l:28cucu}.
Denote the support of $K_j^{n,s,v}$ and $\tilde{K}_j^{n,s,v}*K_i^{n,s,v}$ by $E^{n,s,v}_j$ and $E^{n,s,v}_{j,i}$, respectively. Then it is easy to see that
\Bes
\begin{split}
E_j^{n,s,v}&\subset\{x\in\R^d:|\frac{x}{|x|}-e^{n+s}_v|\leq2^{(n+s)\ga}, 2^{j-1}\leq|x|\leq2^{j+1}\}\\
&\subset\{x\in \R^d:|\inn{x}{e^{n+s}_v}|\leq 2^{j+1},|x-\inn{x}{e^{n+s}_v}e^{n+s}_v|\leq 2^{j+1-(n+s)\ga}\},
\end{split}
\Ees
and for $i<j-s$,
\Bes
\begin{split}
E_{j,i}^{n,s,v}
&\subset E_{j,\sharp}^{n,s,v}:=\big\{x\in \R^d:|\inn{x}{e^{n+s}_v}|\leq 2^{j+2},|x-\inn{x}{e^{n+s}_v}e^{n+s}_v|\leq 2^{j+2-(n+s)\ga}\big\}.
\end{split}
\Ees
Notice that $E_{j,\sharp}^{n,s,v}$ is independent of $i$.

By an elementary calculation, we obtain the following estimate for the kernel $\tilde{K}_j^{n,s,v}*K_i^{n,s,v}$:
\Be\label{e:28kerneltt*}
|\tilde{K}_j^{n,s,v}*K_i^{n,s,v}(x)|\lc2^{-jd}2^{(2\iota-\ga(d-1)) (n+s)}\|\Om\|_1^2\chi_{E^{n,s,v}_{j,\sharp}}(x).
\Ee

\medskip

Next we return to $S_{j,i,1}(x)$ and $S_{j,i,2}(x)$.
Applying \eqref{e:28kerneltt*}, we obtain an estimate for $S_{j,i,1}(x)$ as follows
\Be\label{e:28sji1}
\begin{split}
\sum_{i< j-s}S_{j,i,1}(x)&\lc2^{-jd}2^{(2\iota-\ga(d-1)) (n+s)}\lam\|\Om\|_1^2\sum_{i< j-s}\chi_{{E}^{n,s,v}_{j,\sharp}}*p_{n-i+s}fp_{n-i+s}(x).
\end{split}
\Ee

Using the definition of $p_{n-i+s}$, we get
\Be\label{e:28pnifnisnos}
\begin{split}
\sum_{i< j-s}\chi_{{E}^{n,s,v}_{j,\sharp}}*p_{n-i+s} f p_{n-i+s}(x)
&=\sum_{i< j-s}\sum_{Q\in\mathcal{Q}_{n-i+s}}\int_{y\in(x-E_{j,\sharp}^{n,s,v})\cap Q}p_{Q} f (y)p_{Q}dy\\
&\leq \sum_{i< j-s}\sum_{Q\in\mathcal{Q}_{n-i+s}\atop (x-E_{j,\sharp}^{n,s,v})\cap Q\neq\emptyset }p_Qf_Qp_Q|Q|.
\end{split}
\Ee
Applying the noncommutative Calder\'on--Zygmund property \eqref{e:28PQ}, the above estimate is bounded by
\Bes
\begin{split}
\lam\C_\Om^{-1}\sum_{i< j-s}\sum_{Q\in\mathcal{Q}_{n-i+s}\atop (x-E_{j,\sharp}^{n,s,v})\cap Q\neq\emptyset }\int_Qp_{n-i+s}(y)dy\lc\lam\C_\Om^{-1}\sum_{i< j-s}\int_{\textbf{E}^{n,s,v}_{j,i,\sharp}(x)}p_{n-i+s}(y)dy
\end{split}
\Ees
where
\Bes
\textbf{E}^{n,s,v}_{j,i,\sharp}(x)=\bigcup_{Q\in\mathcal{Q}_{n-i+s}\atop (x-E_{j,\sharp}^{n,s,v})\cap Q\neq\emptyset } Q.
\Ees

Notice that $E_{j,\sharp}^{n,s,v}$ is supported in a rectangle with one sidelength at most $2^{j+2}$ and $d-1$ sidelengths at most $2^{j+2-(n+s)\gamma}$.
For any $Q\in\mathcal{Q}_{n-i+s}$, the sidelength $l(Q)=2^{i-n-s}<2^{j+2-(n+s)\gamma}$. Since all cubes in $\mathcal{Q}_{n-i+s}$ are disjoint, we see that ${E}^{n,s,v}_{j,i,\sharp}(x)$ is supported in a rectangle with one sidelength at most $2^{j+3}$ and $d-1$ sidelengths at most $2^{j+3-(n+s)\gamma}$, and we denote this rectangle by $\textbf{E}^{n,s,v}_{j,\sharp}(x)$ since it is independent of $i$. It is easy to see that the measure of $\textbf{E}^{n,s,v}_{j,\sharp}(x)$ satisfies
\Be\label{e:28ensvj}|\textbf{E}^{n,s,v}_{j,\sharp}(x)|\lc2^{jd-(n+s)\ga(d-1)}.
\Ee
Combining the above arguments,
we obtain that \eqref{e:28pnifnisnos} is bounded by
\Be\label{e:28Ejxnos}
\begin{split}
\lam\C_\Om^{-1}\sum_{i< j-s}\int_{\textbf{E}^{n,s,v}_{j,\sharp}(x)}p_{n-i+s}(y)dy
&\lc\lam\C_\Om^{-1}\int_{\textbf{E}^{n,s,v}_{j,\sharp}(x)}\sum_{i< j-s}p_{n-i+s}(y)dy\\
&\lc\lam\C_\Om^{-1}\big|\textbf{E}^{n,s,v}_{j,\sharp}(x)\big|\lc\lam\C_\Om^{-1}2^{jd-(n+s)\ga(d-1)}.
\end{split}
\Ee

Plugging \eqref{e:28pnifnisnos} with \eqref{e:28Ejxnos} into  \eqref{e:28sji1}, we get
\Be\label{e:28sji1f}
\begin{split}
\sum_{i< j-s}S_{j,i,1}(x)\lc2^{(2\iota-2\gamma(d-1)) (n+s)}\lam\|\Om\|_1.
\end{split}
\Ee

\medskip

Concerning the term $S_{j,i,2}(x)$, the proof is slightly more complicated. By the kernel estimate \eqref{e:28kerneltt*}, we get
\Be\label{e:28sji2}
\begin{split}
\sum_{i< j-s}S_{j,i,2}(x)&\lc2^{-jd}2^{(2\iota-\ga(d-1)) (n+s)}\lam\|\Om\|_1\sum_{i< j-s}\chi_{{E}^{n,s,v}_{j,\sharp}}*p_{n-i} f p_{n-i}(x).
\end{split}
\Ee
By the definition of $p_{n-i}$, we have
\Be\label{e:28pnifnis}
\begin{split}
\sum_{i< j-s}\chi_{{E}^{n,s,v}_{j,\sharp}}*p_{n-i} f p_{n-i}(x)
&=\sum_{i< j-s}\sum_{Q\in\mathcal{Q}_{n-i}}\int_{y\in(x-E_{j,\sharp}^{n,s,v})\cap Q}p_{Q} f (y)p_{Q}dy\\
&\leq \sum_{i< j-s}\sum_{Q\in\mathcal{Q}_{n-i}\atop (x-E_{j,\sharp}^{n,s,{v}})\cap Q\neq\emptyset }p_Qf_Qp_Q|Q|.
\end{split}
\Ee
Applying property \eqref{e:28PQ}, the above estimate is bounded by
\Bes
\lam\C_\Om^{-1}\sum_{i< j-s}\sum_{Q\in\mathcal{Q}_{n-i}\atop (x-E_{j,\sharp}^{n,s,v})\cap Q\neq\emptyset }\int_Qp_{n-i}(y)dy\lc\lam\C_\Om^{-1}\sum_{i< j-s}\int_{\tilde{\textbf{E}}^{n,s,v}_{j,i,\sharp}(x)}p_{n-i}(y)dy
\Ees
where
\Bes
\tilde{\mathbf{E}}^{n,s,v}_{j,i,\sharp}(x)=\bigcup_{Q\in\mathcal{Q}_{n-i}\atop (x-E_{j,\sharp}^{n,s,v})\cap Q\neq\emptyset } Q.
\Ees
For any $Q\in\mathcal{Q}_{n-i}$ with $i<j-s$, the sidelength $l(Q)=2^{i-n}<2^{j-n-s}<2^{j+2-(n+s)\gamma}$. Note that this is the point where we make use of the crucial condition $i<j-s$. Since all cubes in $\mathcal{Q}_{n-i}$ are disjoint, $\tilde{\mathbf{E}}^{n,s,v}_{j,i,\sharp}(x)$ is supported in a rectangle with one sidelength at most $2^{j+3}$ and $d-1$ sidelengths at most $2^{j+3-(n+s)\gamma}$, and we denote this rectangle by $\tilde{\mathbf{E}}^{n,s,v}_{j,\sharp}(x)$ since it is independent of $i$. Its measure satisfies
\Bes
|\tilde{\mathbf{E}}^{n,s,v}_{j,\sharp}(x)|\lc2^{jd-(n+s)\ga(d-1)}.
\Ees
Combining all the above arguments, the estimate for \eqref{e:28pnifnis} is bounded by
\Be\label{e:28Ejx}
\begin{split}
\lam\C_\Om^{-1}\sum_{i< j-s}\int_{\tilde{\mathbf{E}}^{n,s,v}_{j, \sharp}(x)}p_{n-i}(y)dy
&\lc\lam\C_\Om^{-1}\int_{\tilde{\mathbf{E}}^{n,s,v}_{j,\sharp}(x)}\sum_{i< j-s}p_{n-i}(y)dy\\
&\lc\lam\C_\Om^{-1}\big|\tilde{\mathbf{E}}^{n,s,v}_{j,\sharp}(x)\big|
\lc\lam\C_\Om^{-1}2^{jd-(n+s)\ga(d-1)}.
\end{split}
\Ee

In view of \eqref{e:28sji2} and \eqref{e:28pnifnis} with the estimate \eqref{e:28Ejx}, we get
\Be\label{e:28sji2f}
\begin{split}
\sum_{i< j-s}S_{j,i,2}(x)\lc2^{(2\iota-2\gamma(d-1)) (n+s)}\lam\|\Om\|_1.
\end{split}
\Ee

Now combining the estimates \eqref{e:28sji1f}, \eqref{e:28sji2f} and \eqref{e:28sumijs2}, we finish the proof of \eqref{e:28sumijs}. Thus, we have established the desired bound for the term $I$.

\medskip

\textbf{Proof of the term $II$.} By  the H\"older inequality, we get
\Be\label{e:28II}
\begin{split}
II&\leq\sum_{j}\sum_{|i-j|\leq s}\Big\{\Big\| T^{n,s,v}_jp_{n-j} f p_{n-j+s}\Big\|^2_{L_2(\A)}
+\Big\|T^{n,s,v}_ip_{n-i} f p_{n-i+s}\Big\|^2_{L_2(\A)}\Big\}\\
&\lc (s+1)\sum_{j} \Big\| T^{n,s,v}_jp_{n-j} f p_{n-j+s}\Big\|^2_{L_2(\A)}.
\end{split}
\Ee

Our proof below is slightly different from the one used for the term $I$. The previous method can not be applied here because the geometric argument appearing in $\tilde{\textbf{E}}_{j,\sharp}^{n,s,v}(x)$ may not work since the crucial condition $i<j-s$ is necessary. Nevertheless, the term
 $$\big\| T^{n,s,v}_jp_{n-j} f p_{n-j+s}\big\|^2_{L_2(\A)}$$
is simpler than the one in the term $I$, we can give a  direct estimate, though laborious.

By decomposing the kernel $K_j^{n,s,v}$ into its positive and negative parts, we may assume that $K_j^{n,s,v}$ is positive.

By the definition of $p_k$, we may write
\Bes
\begin{split}
T_j^{n,s,v}(p_{n-j}fp_{n-j+s})(x)&=\int_{\R^d}K_j^{n,s,v}(x-y)(p_{n-j}fp_{n-j+s})(y)dy\\
&=\sum_{Q\in\Q_{n-j+s}\atop Q\cap\{x-E_j^{n,s,v}\}\neq\emptyset}p_{{n-j}}\chi_Q\Big(\int_{Q}K_j^{n,s,v}(x-y)f(y)dy\Big)p_Q\\
&=\sum_{Q\in\Q_{n-j+s}\atop Q\cap\{x-E_j^{n,s,v}\}\neq\emptyset}\int_{Q}\Big[p_{n-j}\big(K_j^{n,s,v}(x-\cdot)f(\cdot)\big)_{n-j+s}p_{n-j+s}\Big](z)dz\\
&=\int_{\mathbf{E}_j^{n,s,v}(x)}\big[p_{n-j}\big(K_j^{n,s{v}}(x-\cdot)f(\cdot)\big)_{n-j+s}p_{n-j+s}\big](z)dz,
\end{split}
\Ees
where we use the notation
$$
\mathbf{E}_j^{n,s,v}(x)=\bigcup_{Q\in\Q_{n-j+s}\atop Q\cap\{x-E_j^{n,s,v}\}\neq\emptyset}Q.
$$
By a similar argument as done for $\mathbf{E}_{j,\sharp}^{n,s,v}(x)$ (see \eqref{e:28ensvj}), we see that $\mathbf{E}^{n,s,v}_{j}(x)$ is supported in a rectangle with one sidelength at most $2^{j+2}$ and $d-1$ sidelengths at most $2^{j+2-(n+s)\gamma}$. Hence,
\Bes
|\mathbf{E}_j^{n,s,v}(x)|\lc2^{jd-{(n+s)}\ga(d-1)}.
\Ees

Next, by the convexity inequality for operator-valued functions \eqref{e:28conv} and the preceding measure estimate of $\mathbf{E}_j^{n,s,v}(x)$, we get
\Bes
\begin{split}
\big|T_j^{n,s,v}(&p_{n-j}fp_{n-j+s})(x)\big|^2\\
&\lc2^{jd-{(n+s)}\ga(d-1)}\int_{\mathbf{E}_j^{n,s,v}(x)}\Big|p_{n-j}\Big(K_j^{n,s,v}(x-\cdot)f(\cdot)\Big)_{n-j+s}p_{n-j+s}(z)\Big|^2dz.
\end{split}
\Ees

Combining the above estimate, we obtain
\Be\label{e:19bl2fn}
\begin{split}
&\|T_j^{n,s,v}p_{n-j}fp_{n-j+s}\|^2_{L_2(\A)}\lc2^{jd-{(n+s)}\ga(d-1)}\\
&\quad\times\int_{\R^d}\int_{\mathbf{E}_j^{n,s,v}(x)}\tau\Big(\Big|p_{n-j}\Big(K_j^{n,s,v}(x-\cdot)f(\cdot)\Big)_{n-j+s}p_{n-j+s}(z)\Big|^2\Big)dzdx.
\end{split}
\Ee

Since $K_j^{n,s,v}$ is a positive function and $f$ is a positive operator-valued function in $\A$, we see that $K(x-\cdot)f(\cdot)$ is positive in $\A$. Therefore,
\Be\label{e:28Kjaverage}
\begin{split}
\big(K_j^{n,s,v}(x-\cdot)f(\cdot)\big)_{n-j+s}&=\sum_{Q\in\Q_{n-j+s}}\frac1{|Q|}\int_{Q}K_j^{n,s,v}(x-y)f(y)dy\,\chi_Q\\
&\lc\sum_{Q\in\Q_{n-j+s}}\frac1{|Q|}\int_{Q}f(y)dy\,\chi_Q\,2^{-jd+(n+s)\iota}\|\Om\|_{1}\\
&=2^{-jd+(n+s)\iota}\|\Om\|_{1}f_{n-j+s}.
\end{split}
\Ee

Now applying the Hilbert-valued H\"older inequality in Lemma \ref{l:28holder}, we get
\Be\label{e:19tracef}
\begin{split}
\tau\Big(\Big|p_{n-j}&\big(K_j^{n,s,v}(x-\cdot)f(\cdot)\big)_{n-j+s}p_{n-j+s}(z)\Big|^2\Big)\\
&\leq\tau\Big(p_{n-j}\big( K_j^{n,s,v}(x-\cdot)f(\cdot)\big)_{n-j+s}p_{n-j}(z)\Big)\\
&\quad\quad\times\|p_{n-j+s}\big(K_j^{n,s,v}(x-\cdot)f(\cdot)\big)_{n-j+s}p_{n-j+s}(z)\|_\M.\\
\end{split}\Ee
Applying \eqref{e:28Kjaverage} and the noncommutative Calder\'on--Zygmund property \eqref{e:28PQ}, the proceeding inequality is majorized by
\Bes
\begin{split}
2^{-2jd+2(n+s)\iota}\|\Om\|^2_{1}\tau\big(p_{n-j}&f_{n-j+s}p_{n-j}(z)\big)\|p_{n-j+s}f_{n-j+s}p_{n-j+s}\|_\A\\
&\lc2^{-2jd+2(n+s)\iota}\|\Om\|_{1}\lam\tau\big(p_{n-j}f_{n-j+s}p_{n-j}(z)\big).
\end{split}
\Ees

By the definition of $\mathbf{E}_j^{n,s,v}(x)$, we see that $\mathbf{E}^{n,s,v}_{j}(x)$ is supported in a rectangle with center $x$, one sidelength at most $2^{j+2}$ and $d-1$ sidelengths at most $2^{j+2-(n+s)\gamma}$. Then for any fixed $z\in\R^d$, a simple geometric observation shows that the set $\{x:\,\mathbf{E}_j^{n,s,v}(x)\ni z\}$ is contained in a rectangle with center $z$, one sidelength at most $2^{j+2}$ and $d-1$ sidelengths at most $2^{j+2-(n+s)\gamma}$. Hence, we have the following estimate
\Be\label{e:19ejnsv}
\Big|\int_{\{x:\,\mathbf{E}_j^{n,s,v}(x)\ni z\}}dx\Big|\lc2^{jd-{(n+s)}\ga(d-1)}.
\Ee
Plugging \eqref{e:19tracef} into \eqref{e:19bl2fn}, then applying the Fubini theorem with \eqref{e:19ejnsv}, and finally using the trace preserving property of expectation, we obtain
\Bes
\begin{split}
II&\lc (s+1)\sum_j\|T_j^{n,s,v}p_{n-j}fp_{n-j+s}\|^2_{L_2(\A)}\\
&\lc (s+1)2^{-2(n+s)\ga (d-1)+2(n+s)\iota}\|\Om\|_{1}\lam\sum_{j\in\Z}\vphi\big(p_{n-j}f_{n-j+s}p_{n-j}\big)\\
&\lc (s+1)2^{-2(n+s)\ga (d-1)+2(n+s)\iota}\|\Om\|_{1}\lam\sum_{j\in\Z}\vphi\big(p_{n-j}fp_{n-j}\big)\\
&\lc (s+1)2^{-2(n+s)\ga (d-1)+2(n+s)\iota}\|\Om\|_{1}\lam\|f\|_{L_1(\A)},
\end{split}
\Ees
which is the desired estimate for $II$. Hence, combining the estimates for $I$ and $II$, we complete the proof of the third inequality in Lemma \ref{L:28fnjs}.

Finally, we point out that the proofs of the first and second inequalities in Lemma \ref{L:28fnjs} follow step by step from that of the third inequality, with the only modification being the replacement of \eqref{e:28pnjin} by \eqref{e:28pnjinaverage}.
\end{proof}

\vskip0.24cm
\medskip

\section*{Declarations}

\subsection*{Competing interests} The author declares no competing interests.

\subsection*{Funding} This work is supported by National Natural Science Foundation of China (No. 12271124, No. 12322107 and No. W2441002) and Heilongjiang Provincial Natural Science Foundation of China (YQ2022A005).

\subsection*{Data Availability} No datasets were generated or analysed during the current study.


\vskip1cm

\bibliographystyle{amsplain}
\bibliography{rf}

\end{document}